\numberwithin{equation}{section}
\newtheorem{theorem}{Theorem}[section]
\newtheorem{proposition}[theorem]{Proposition}
\newtheorem{lemma}[theorem]{Lemma}
\theoremstyle{definition}
\newtheorem{example}[theorem]{Example}
\theoremstyle{remark}
\newtheorem{remark}[theorem]{Remark}
\renewcommand{\ker}{\operatorname{Ker}}
\newcommand{\Z}{\mathbb{Z}}
\newcommand{\Q}{\mathbb{Q}}
\newcommand{\C}{\mathbb{C}}
\newcommand{\proj}{{\mathbb P}}
\newcommand{\QZ}{\mathbb{Q}/\mathbb{Z}}
\newcommand{\Zn}{\mathbb{Z}/n}
\newcommand{\HH}{\mathcal{H}}
\begin{document}

\title[]{Unramified cohomology, integral coniveau filtration and Griffiths group}
\author[]{Shouhei Ma}
\thanks{Supported by JSPS KAKENHI 17K14158 and 20H00112.} 
\address{Department~of~Mathematics, Tokyo~Institute~of~Technology, Tokyo 152-8551, Japan}
\email{ma@math.titech.ac.jp}
\subjclass[2010]{14F43, 14C15, 14C25}
\keywords{unramified cohomology, integral coniveau filtration, Griffiths group, 
$\mathcal{H}$-cohomology, birational invariant} 

\begin{abstract}
We prove that the $k$-th unramified cohomology $H^{k}_{nr}(X, {\QZ})$ 
of a smooth complex projective variety $X$ with small ${\rm CH}_{0}(X)$ 
has a filtration of length $[k/2]$, 
whose first piece is the torsion part of the quotient of $H^{k+1}(X, {\Z})$ by its coniveau $2$ subgroup, 
and whose next graded piece 
is controlled by the Griffiths group ${\rm Griff}^{k/2+1}(X)$ when $k$ is even 
and is related to the higher Chow group ${\rm CH}^{(k+3)/2}(X, 1)$ when $k$ is odd. 
The first piece is a generalization of the Artin-Mumford invariant ($k=2$) and 
the Colliot-Th\'el\`ene--Voisin invariant ($k=3$). 
We also give an analogous result for the $\mathcal{H}$-cohomology 
$H^{d-k}(X, \mathcal{H}^{d}({\QZ}))$, $d=\dim X$. 
\end{abstract} 

\maketitle


\section{Introduction}\label{sec:intro}

Since the pioneering work of Artin and Mumford \cite{A-M}, 
unramified cohomology $H^{k}_{nr}(X, {\QZ})$ of a smooth complex projective variety $X$ with torsion coefficients 
has served as a powerful birational invariant for distinguishing rational varieties and 
nearly rational but irrational varieties. 
One of the key observations of Artin and Mumford was the identification of $H^2_{nr}(X, {\QZ})$, 
also known as the unramified Brauer group, with the torsion part of $H^3(X, {\Z})$ for unirational $X$. 
Unramified cohomology of the next degree $k=3$ and higher has been extensively studied 
beginning with Colliot-Th\'el\`ene and Ojanguren \cite{CT-O}. 
A breakthrough in degree $3$ was made by Colliot-Th\'el\`ene and Voisin \cite{CT-V}, 
which was made possible by the solution of the Bloch-Kato conjecture by Voevodsky and Rost \cite{Voe}, 
and where $H^{3}_{nr}(X, {\QZ})$ was identified with the defect $Z^4(X)$ 
of the integral Hodge conjecture for $H^4(X, {\Z})$. 
(Kahn \cite{Ka} noticed that the use of the Bloch-Kato conjecture can be reduced to that of the Merkurjev-Suslin theorem \cite{M-S}.) 
Later a description of the next group $H^{4}_{nr}(X, {\QZ})$ in terms of algebraic cycles 
was given in \cite{Vo}, \cite{Ma}. 

In degree $k\geq 5$, geometric interpretation of $H^{k}_{nr}(X, {\QZ})$ is mysterious. 
The purpose of this article is to look at a certain common structure in this series of work, 
which provides a geometric description of a relatively elementary piece of $H^{k}_{nr}(X, {\QZ})$ 
for general $k$. 
This would be a very first step for understanding the relationship between 
unramified cohomology and algebraic cycles. 

Recall (\cite{Gro}, \cite{B-O}) that the coniveau filtration of $H^{l}(X, {\Z})$ is defined by 
\begin{equation}\label{eqn: coniveau filtration}
N^{r}H^{l}(X, {\Z}) \: = \: 
\sum_{W\subset X} {\ker}( H^{l}(X, {\Z}) \to H^{l}(X\backslash W, {\Z}) ), 
\end{equation}
where $W$ ranges over all closed algebraic subsets of $X$ of codimension $\geq r$. 

Our result is stated as follows. 

\begin{theorem}\label{main1}
Let $X$ be a smooth complex projective variety. 
Let $k>1$. 
Assume that ${\rm CH}_{0}(X)$ is supported in dimension $<k$, 
e.g., $X$ rationally connected. 
Then $H^{k}_{nr}(X, {\QZ})$ has a filtration 
\begin{equation}\label{eqn: filtration}
0 \subset F_1 \subset F_2 \subset \cdots \subset F_{[k/2]} = H^{k}_{nr}(X, {\QZ}) 
\end{equation}
of length $[k/2]$, such that 
$F_1$ is isomorphic to the torsion part of the quotient 
$H^{k+1}(X, {\Z})/N^2H^{k+1}(X, {\Z})$. 
When $k$ is even, the next graded piece $F_2/F_1$ is isomorphic to 
a subquotient of the Griffiths group ${\rm Griff}^{k/2+1}(X)$. 
\end{theorem}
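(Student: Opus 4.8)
The plan is to recast the statement inside the Bloch--Ogus theory of the Zariski sheaves $\mathcal{H}^{q}(A)$, the sheafifications of $U\mapsto H^{q}(U,A)$, for which $H^{k}_{nr}(X,A)=H^{0}(X,\mathcal{H}^{k}(A))$. I will lean on three inputs. First, the coniveau spectral sequence $E_{2}^{p,q}(A)=H^{p}(X,\mathcal{H}^{q}(A))\Rightarrow H^{p+q}(X,A)$, whose abutment filtration is \eqref{eqn: coniveau filtration}, together with the Gersten resolution, which gives $H^{p}(X,\mathcal{H}^{q}(A))=0$ for $p>q$ and Bloch's formula $H^{j}(X,\mathcal{H}^{j}(\Z))\cong{\rm CH}^{j}(X)$, compatibly with the cycle class map. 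Second, the Bloch--Kato (Voevodsky--Rost) theorem --- reducible to Merkurjev--Suslin in the range at issue --- which pins down the torsion sheaves $\mathcal{H}^{q}(\QZ)$. Third, the hypothesis that ${\rm CH}_{0}(X)$ is supported in dimension $<k$, which makes available a refined Bloch--Srinivas decomposition of the diagonal $N\Delta_{X}=\sum_{r}\Gamma_{r}$ in which $\Gamma_{r}$ is supported on a product $V_{r}\times W_{r}$ with $\operatorname{codim}V_{r}=r$ and $\dim W_{r}<k$; it forces $\mathcal{H}^{q}(\Q)=0$ for $q\geq k$, so that $H^{k+1}(X,\Z)/N^{2}$ is already torsion.

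The first step is to dispose of rational coefficients. The coefficient sequence $0\to\Z\to\Q\to\QZ\to0$ yields an exact sequence of $\mathcal{H}$-sheaves, and the vanishing $\mathcal{H}^{k}(\Q)=0$ collapses the Bockstein to an isomorphism $\mathcal{H}^{k}(\QZ)\xrightarrow{\sim}\mathcal{H}^{k+1}(\Z)_{\mathrm{tors}}$ onto the torsion subsheaf. Taking global sections, $H^{k}_{nr}(X,\QZ)\cong H^{0}(X,\mathcal{H}^{k+1}(\Z)_{\mathrm{tors}})$, so the problem becomes one about torsion unramified classes of degree $k+1$ with integral coefficients.

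Next I would build the filtration \eqref{eqn: filtration} using the decomposition of the diagonal. Correspondences act on $H^{k}_{nr}$ and on the sheaves $\mathcal{H}^{q}$, and $N\cdot\mathrm{id}=\sum_{r}[\Gamma_{r}]$ splits $H^{k}_{nr}(X,\QZ)$ up to $N$-torsion into pieces indexed by $r$; the support bounds $\operatorname{codim}V_{r}=r$, $\dim W_{r}<k$ leave only $[k/2]$ nonzero layers, which I take as the graded pieces of $F_{\bullet}$. The bottom layer is the part that already comes from global integral cohomology: under the edge map $E_{\infty}^{0,k+1}\hookrightarrow H^{0}(X,\mathcal{H}^{k+1}(\Z))$ and the surjectivity of the topological Bockstein onto $H^{k+1}(X,\Z)_{\mathrm{tors}}$, and assembling the coniveau levels $0$ and $1$, this is identified with the torsion of $H^{k+1}(X,\Z)/N^{2}H^{k+1}(X,\Z)$. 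This recovers the Artin--Mumford invariant when $k=2$ (where $N^{2}H^{3}=0$) and the Colliot-Th\'el\`ene--Voisin invariant when $k=3$.

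The heart of the matter, and the step I expect to be the main obstacle, is the identification of $F_{2}/F_{1}$ when $k$ is even. The corresponding projector $\Gamma_{r}$ is supported on cycles of codimension $k/2+1$, and Bloch's formula $H^{k/2+1}(X,\mathcal{H}^{k/2+1}(\Z))\cong{\rm CH}^{k/2+1}(X)$ brings codimension-$(k/2+1)$ cycles into play; a class in this layer is represented by a homologically trivial such cycle, whose Abel--Jacobi image lies in the intermediate Jacobian $J^{k+1}(X)$. The assertion is then that this layer is a subquotient of ${\rm Griff}^{k/2+1}(X)$. The difficulty is threefold: tracking torsion faithfully through the coefficient sequence and the spectral sequence; using the diagonal decomposition a second time to split off algebraically trivial cycles, so that exactly the quotient by algebraic equivalence --- the Griffiths group --- survives rather than all homologically trivial cycles; and checking that the coniveau filtration, the Bockstein, and algebraic equivalence are mutually compatible, so that the comparison map is well defined with kernel and image of the claimed shape. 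Only a subquotient, not the whole Griffiths group, can appear, because both the spectral sequence differentials and the passage to the torsion subsheaf obstruct injectivity and surjectivity; pinning down precisely which subquotient is the crux of the argument.
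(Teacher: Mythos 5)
The first step of your reduction is false, and the error is fatal. From ${\rm CH}_{0}(X)$ supported in dimension $<k$ one gets vanishing of the \emph{global sections} $H^{k}_{nr}(X,\Z)=H^{0}(X,\HH^{k}(\Z))$ (Colliot-Th\'el\`ene--Voisin), not vanishing of the sheaves: $\HH^{q}(\Q)$ is not zero in general, since its stalk at the generic point is $\varinjlim_{U}H^{q}(U,\Q)$, which is nonzero even for $X=\proj^{2}$, $q=2$ (a class in $H^{2}(\proj^{2}\setminus C,\Q)\simeq H^{1}(C,\Q)(-1)$ with nontrivial residue along a curve $C$ of positive genus survives every further restriction, because residues are compatible with restriction and $H^{1}$ of a curve injects into $H^{1}$ of any open subset). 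Worse, your conclusion $\HH^{k}(\QZ)\simeq{}_{tor}\HH^{k+1}(\Z)$ is self-contradictory: the consequence of Bloch--Kato that this theory actually uses is that $\HH^{k+1}(\Z)$ is \emph{torsion-free}, so your right-hand side is the zero sheaf, while $\HH^{k}(\QZ)$ is certainly nonzero. Bloch--Kato forces the sheaf-level Bockstein to vanish, not to be an isomorphism. The correct reduction, which is where the proof must start, is the isomorphism $H^{k}_{nr}(X,\QZ)\simeq{}_{tor}H^{1}(X,\HH^{k}(\Z))$, obtained from the exact sequence \eqref{eqn: UCT} with $(p,q)=(0,k)$ together with $H^{k}_{nr}(X,\Z)=0$; the group to be filtered is a degree-one Zariski cohomology group of $\HH^{k}(\Z)$, not $H^{0}$ of a torsion subsheaf.

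The second gap is the construction of the filtration itself. You invoke a ``refined'' decomposition $N\Delta_{X}=\sum_{r}\Gamma_{r}$ with $\Gamma_{r}$ supported on $V_{r}\times W_{r}$, $\operatorname{codim}V_{r}=r$, $\dim W_{r}<k$; no such multi-term decomposition follows from the hypothesis --- Bloch--Srinivas gives only a two-term decomposition (one piece supported on $D\times X$ with $D$ a divisor, one on $X\times W$) --- and even if it existed, a sum of correspondences does not by itself yield a filtration with identified graded pieces. In fact no decomposition of the diagonal is needed at all: in the Bloch--Ogus spectral sequence with $\Z$-coefficients, $E_{r}^{p,q}=0$ for $p<0$, so no differential enters $E_{r}^{1,k}$; hence the groups $E_{r}^{1,k}$ form a decreasing chain of subgroups of $E_{2}^{1,k}=H^{1}(X,\HH^{k}(\Z))$, stabilizing at $r=[k/2]+1$ because $E_{r}^{r+1,k-r+1}=0$ once $r>k/2$. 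Taking $n$-torsion and then the limit over $n$ gives $(F_{i})$, with $F_{1}={}_{tor}E^{1,k}_{\infty}={}_{tor}\bigl(H^{k+1}(X,\Z)/N^{2}H^{k+1}(X,\Z)\bigr)$, where one uses that $H^{k+1}(X,\Z)/N^{1}H^{k+1}(X,\Z)\subset H^{k+1}_{nr}(X,\Z)$ is torsion-free. Finally, for $k=2p$ you misquote Bloch's formula: $H^{p+1}(X,\HH^{p+1}(\Z))$ is $CH^{p+1}(X)/\!\sim_{alg}$, not $CH^{p+1}(X)$, and this is precisely why the Griffiths group appears: $d_{p}$ embeds $E_{p}^{1,2p}/E_{\infty}^{1,2p}$ into $E_{p}^{p+1,p+1}$, a quotient of $CH^{p+1}(X)/\!\sim_{alg}$, compatibly with the cycle class map to $E_{\infty}^{p+1,p+1}=N^{p+1}H^{2p+2}(X,\Z)$, so torsion classes land in the image of ${\rm Griff}^{p+1}(X)$; no Abel--Jacobi theory and no second use of the diagonal are required. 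As written, your treatment of $F_{2}/F_{1}$ is an acknowledged list of difficulties rather than an argument, so even setting aside the two errors above, the proposal does not constitute a proof.
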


Although we have assumed that ${\rm CH}_{0}(X)$ is small, 
Theorem \ref{main1} is valid for general $X$ 
if we replace $H^{k}_{nr}(X, {\QZ})$ with the quotient of  
$H^{k}_{nr}(X, {\QZ})$ by $H^{k}_{nr}(X, {\Z})\otimes{\QZ}$; 
the group $H^{k}_{nr}(X, {\Z})$ vanishes when ${\rm CH}_0(X)$ is supported in dimension $<k$ (\cite{CT-V}). 
Since the study of $H^{k}_{nr}(X, {\QZ})$ has centered around varieties with small ${\rm CH}_{0}(X)$, 
we have chosen to state the above form for simplicity. 

When $k=2$, this is the result of Artin-Mumford \cite{A-M} where $N^2H^3(X, {\Z})=0$. 
When $k=3$, this is the result of Colliot-Th\'el\`ene and Voisin \cite{CT-V} 
where the torsion part of $H^4(X, {\Z})/N^2H^4(X, {\Z})$ coincides with the integral Hodge defect $Z^4(X)$. 
The torsion part of $H^{k+1}(X, {\Z})/N^{2}H^{k+1}(X, {\Z})$ is a natural generalization of 
the Artin-Mumford invariant and the Colliot-Th\'el\`ene--Voisin invariant. 
When $k=4$, an explicit description of $F_2/F_1$ as a subgroup of ${\rm Griff}^{3}(X)$ 
was given by Voisin \cite{Vo} assuming $F_1=0$; 
later the description of $F_1$ as above was given in \cite{Ma} Remark 4.2. 

The torsion part of $H^{k+1}(X, {\Z})/N^{2}H^{k+1}(X, {\Z})$ thus arises as 
a relatively elementary piece of $H^{k}_{nr}(X, {\QZ})$, and can be used as a first check for nontriviality.  
It vanishes when $X$ admits an integral cohomological decomposition of the diagonal 
in the sense of \cite{Vo2} (Proposition \ref{prop: diagonal decomposition}). 
The second part of Theorem \ref{main1} says that 
the next piece $F_{2}/F_{1}$ is controlled by ${\rm Griff}^{k/2+1}(X)$. 
Although this is not explicit, it says at least that 
triviality of ${\rm Griff}^{k/2+1}(X)$ implies that of $F_{2}/F_{1}$.  
When $k$ is odd, the role of the Chow group ${\rm CH}^{k/2+1}(X)$ will be partly 
replaced by the higher Chow group ${\rm CH}^{(k+3)/2}(X, 1)$: see \S \ref{ssec: odd degree}. 
 
The filtration $(F_{i})$ is introduced through 
the Bloch-Ogus spectral sequence with ${\Z}$-coefficients, 
and could be thought of as measuring a sort of geometric depth. 
In addition to the relationship with algebraic cycles, 
what makes the theory of unramified cohomology rich and useful is 
its description via Galois cohomology. 
When one constructs an element of $H^k_{nr}(X, {\QZ})$ by Galois cohomology, 
say in the Noether problem, 
it would be a natural question which level $F_i$ does that element belong to. 

As was first observed by Colliot-Th\'el\`ene and Voisin \cite{CT-V}, 
the $\mathcal{H}$-cohomology 
$H^{d-k}(X, \mathcal{H}^{d}({\QZ}))$ where $d=\dim X$
is also a birational invariant and is somewhat analogous to $H^{k}_{nr}(X, {\QZ})$. 
We also prove the following analogue of Theorem \ref{main1}. 

\begin{theorem}\label{main2}
Let $X$ be a smooth complex projective variety of dimension $d$. 
Let $k>1$. 
Assume that ${\rm CH}_0(X)$ is supported in dimension $<k-1$. 
Then $H^{d-k}(X, {\HH}^{d}({\QZ}))$ has a filtration of length $[k/2]$ 
\begin{equation*}\label{eqn: filtration homology intro}
0 \subset F_1 \subset F_2 \subset \cdots \subset F_{[k/2]} = H^{d-k}(X, {\HH}^{d}({\QZ})), 
\end{equation*}
such that $F_1$ is isomorphic to the finite abelian group 
\begin{equation}\label{eqn: 1st filter homology}
H^{2d-k+1}(X, {\Z})/N^{d-k+2}H^{2d-k+1}(X, {\Z}). 
\end{equation}
When $k$ is even, the next graded piece $F_2/F_1$ is isomorphic to 
a subquotient of the Griffiths group ${\rm Griff}_{k/2-1}(X)$. 
\end{theorem}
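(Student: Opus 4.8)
The plan is to follow the proof of Theorem \ref{main1} step by step, replacing the unramified group $H^{0}(X,\mathcal{H}^{k}(\QZ))$ by $H^{d-k}(X,\mathcal{H}^{d}(\QZ))$ and the abutment $H^{k}(X,\Z)$ by its Poincar\'e dual $H^{2d-k}(X,\Z)$. I would run the coniveau (Bloch--Ogus) spectral sequence $E_{2}^{p,q}=H^{p}(X,\mathcal{H}^{q}(A))\Rightarrow H^{p+q}(X,A)$ \cite{B-O} for the three coefficient groups $A=\Z,\Q,\QZ$ at once, locating $H^{d-k}(X,\mathcal{H}^{d}(\QZ))=E_{2}^{d-k,d}(\QZ)$ on the line $p+q=2d-k$. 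The basic vanishing that drives everything is that $H^{q}(U,\Z)=0$ for $q>\dim U$ on a smooth affine $U$, so $\mathcal{H}^{q}(\Z)=0$ for $q>d$; this both singles out $\mathcal{H}^{d}$ as the top sheaf and kills the columns $E_{2}^{p,q}$ with $q>d$, which will bound the length of the filtration and collapse the lower coniveau pieces.

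For the bottom piece I would first use $\mathcal{H}^{d+1}(\Z)=0$ to see that $\mathcal{H}^{d}(\QZ)=\mathcal{H}^{d}(\Z)\otimes\QZ$, so that the Bockstein exact sequence relates $H^{d-k}(X,\mathcal{H}^{d}(\QZ))$ to $H^{d-k}(X,\mathcal{H}^{d}(\Z))$ and to the torsion subgroup of $H^{d-k+1}(X,\mathcal{H}^{d}(\Z))$. The integral term $H^{d-k+1}(X,\mathcal{H}^{d}(\Z))=E_{2}^{d-k+1,d}(\Z)$ contributes to $H^{2d-k+1}(X,\Z)$, and by the Bloch--Ogus identification of the spectral sequence filtration with the coniveau filtration its image in $E_{\infty}$ is ${\rm gr}^{d-k+1}_{N}H^{2d-k+1}(X,\Z)$. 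Since $\mathcal{H}^{q}(\Z)=0$ for $q>d$, all graded pieces ${\rm gr}^{p}_{N}H^{2d-k+1}$ with $p<d-k+1$ vanish, so this quotient is exactly the finite group $H^{2d-k+1}(X,\Z)/N^{d-k+2}H^{2d-k+1}(X,\Z)$; here the hypothesis that ${\rm CH}_{0}(X)$ is supported in dimension $<k-1$ enters through the rational decomposition of the diagonal, which makes the rational coniveau terms $H^{\bullet}(X,\mathcal{H}^{\bullet}(\Q))$ vanish in the relevant range and hence makes the quotient finite. This is the piece $F_{1}$, generalizing the Artin--Mumford and Colliot-Th\'el\`ene--Voisin invariants.

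The complementary, divisible part $H^{d-k}(X,\mathcal{H}^{d}(\Z))\otimes\QZ$ is then filtered by the coniveau spectral sequence with $\Q$-coefficients along the line $p+q=2d-k$. The Bloch--Ogus vanishing $E_{2}^{p,q}=0$ for $p>q$, together with $q\le d$, leaves exactly the columns that account for the $[k/2]$ steps $F_{1}\subset\cdots\subset F_{[k/2]}$. For $k$ even, the first graded piece beyond $F_{1}$ is governed by the first nonzero differential out of the relevant column; following Voisin's analysis of the case $k=4$ of Theorem \ref{main1} \cite{Vo}, I would interpret the source of this differential through the cycle class map and identify $F_{2}/F_{1}$ with a subquotient of the group of codimension $d-k/2+1$ cycles that are homologically but not algebraically trivial, that is, a subquotient of ${\rm Griff}_{k/2-1}(X)$. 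Poincar\'e duality is what converts the codimension bookkeeping of the coniveau filtration on $H^{2d-k}(X,\Z)$ into the dimension indexing of ${\rm Griff}_{k/2-1}$.

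The main difficulty is not geometric but lies in the spectral-sequence bookkeeping. First, one must prove that $F_{1}$ is the \emph{full} coniveau quotient and not a proper subquotient of it: this rests on the collapsing of every column of $\mathcal{H}$-degree $>d$ and on the vanishing of the rational sheaf cohomology supplied by the decomposition of the diagonal, and the compatibility of the Bockstein with the Bloch--Ogus edge map has to be checked with care. Second, and more delicate, is matching $F_{2}/F_{1}$ to ${\rm Griff}_{k/2-1}(X)$ through Poincar\'e duality: one must track the exact twist and degree under the duality isomorphism and verify that the leading differential is computed by the cycle class map on homologically trivial cycles. This last step is the one most sensitive to the parity of $k$, and is precisely where the odd case, with the higher Chow group ${\rm CH}^{(k+3)/2}(X,1)$ in place of the Griffiths group, would branch off.
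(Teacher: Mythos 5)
Your bottom piece $F_1$ is handled essentially as the paper does it, and correctly: since $\mathcal{H}^q(\Z)=0$ for $q>d$, no differential enters the spot $(d-k+1,d)$, so $E_\infty^{d-k+1,d}$ is a genuine subgroup of $E_2^{d-k+1,d}=H^{d-k+1}(X,\mathcal{H}^d(\Z))$ and equals $H^{2d-k+1}(X,\Z)/N^{d-k+2}H^{2d-k+1}(X,\Z)$ (the equality $N^{d-k+1}H^{2d-k+1}(X,\Z)=H^{2d-k+1}(X,\Z)$ again coming from the support $q\le d$); and the hypothesis on ${\rm CH}_0(X)$ makes $H^{d-k+1}(X,\mathcal{H}^d(\Z))$ torsion, hence this quotient finite.

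The genuine gap is in how you construct the rest of the filtration. You place the steps $F_2\subset\cdots\subset F_{[k/2]}$ on the divisible part $H^{d-k}(X,\mathcal{H}^d(\Z))\otimes\QZ$, filtered ``by the coniveau spectral sequence with $\Q$-coefficients along the line $p+q=2d-k$.'' This cannot work, for two reasons. First, under your own hypothesis that divisible part is zero: ${\rm CH}_0(X)$ supported in dimension $<k-1$ implies (Colliot-Th\'el\`ene--Voisin) that $H^{d-k}(X,\mathcal{H}^d(\Z))$ is annihilated by a natural number, and this vanishing is precisely what identifies the whole group $H^{d-k}(X,\mathcal{H}^d(\QZ))$ with the torsion group ${}_{tor}H^{d-k+1}(X,\mathcal{H}^d(\Z))$ on which the entire filtration must live; so your higher steps would be filtering $0$, whereas the theorem needs $F_{[k/2]}$ to be the whole group. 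Second, every graded piece here is a torsion group, and a $\Q$-coefficient spectral sequence is blind to torsion. The correct construction stays with the \emph{integral} spectral sequence at the single spot $(d-k+1,d)$: since nothing maps in, $E_{r+1}^{d-k+1,d}=\operatorname{Ker}\bigl(d_r\colon E_r^{d-k+1,d}\to E_r^{d-k+r+1,d-r+1}\bigr)$ gives a decreasing chain of subgroups, stabilizing at $r=[k/2]+1$ because the target vanishes for $r>k/2$; one sets $F_i={}_{tor}E^{d-k+1,d}_{[k/2]+2-i}$ and checks that $d_r$ stays injective on torsion graded pieces via ${}_{n}E_{r+1}^{d-k+1,d}={}_{n}E_r^{d-k+1,d}\cap\operatorname{Ker}(d_r)$. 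Finally, for $k=2p$ the relevant differential $d_p$ lands in $E_p^{d-p+1,d-p+1}$, a quotient of $E_2^{d-p+1,d-p+1}={\rm CH}^{d-p+1}(X)/\!\sim_{alg}$ sitting in total degree $2d-k+2$ (not $2d-k$ as you write), and the graded piece injects into a quotient of $\operatorname{Ker}\bigl({\rm CH}^{d-p+1}(X)/\!\sim_{alg}\to H^{2d-k+2}(X,\Z)\bigr)={\rm Griff}^{d-p+1}(X)$, which \emph{is} ${\rm Griff}_{k/2-1}(X)$ by mere codimension-to-dimension relabeling; no Poincar\'e duality is needed anywhere in the argument.
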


%
When $k=3$, Theorem \ref{main2} is the result of \cite{CT-V} 
where the group \eqref{eqn: 1st filter homology} coincides with the defect $Z_{2}(X)$ 
of the integral Hodge conjecture for $H^{2d-2}(X, {\Z})$. 
When $k=4$, Theorem \ref{main2} is the result of \cite{Ma} where 
$F_{2}/F_{1}$ coincides with the whole ${\rm Griff}_{1}(X)$ (when ${\rm CH}_0(X)$ is small). 
The group \eqref{eqn: 1st filter homology} is a generalization of $Z_{2}(X)$, 
and is also an analogue of the torsion part of $H^{k+1}(X, {\Z})/N^{2}H^{k+1}(X, {\Z})$. 
We notice that the filtration of $H^k_{nr}(X, {\QZ})$ and that of $H^{d-k}(X, {\HH}^{d}({\QZ}))$ 
have the same length. 

Theorem \ref{main2} holds for general $X$ without assumption on ${\rm CH}_{0}(X)$ 
if we replace $H^{d-k}(X, {\HH}^{d}({\QZ}))$ by the quotient 
\begin{equation*}
H^{d-k}(X, {\HH}^{d}({\QZ})) / (H^{d-k}(X, {\HH}^{d}({\Z}))\otimes {\QZ}), 
\end{equation*} 
and replace the group \eqref{eqn: 1st filter homology} (no longer finite) 
by its torsion part.

\begin{remark}
After the preprint version of this paper had appeared, 
independent work of Schreieder (\cite{Sc1}, \cite{Sc2}) appeared 
where filtrations closely related to \eqref{eqn: filtration} are introduced without using the Bloch-Kato conjecture 
and the relation between unramified cohomology and algebraic cycles is further investigated. 
\end{remark}


\textbf{Notation.} 
If $A$ is an abelian group and $n$ is a natural number, 
we write ${}_{n}A$ for the $n$-torsion part of $A$, 
namely $\{ x\in A \: | \: nx=0 \}$, 
and also write $A/n=A/nA$. 
We denote by ${}_{tor}A$ the torsion part of $A$, 
namely the union of all ${}_{n}A$, $n>0$.


\section{Bloch-Ogus theory}\label{sec:Bloch-Ogus}

In this section we recall the Bloch-Ogus theory following \cite{B-O} and \cite{CT-V}. 
Let $X$ be a smooth complex projective variety. 
Let $A$ be an abelian group (typically ${\Z}$ or ${\Z}/n$ or ${\QZ}$). 
We write ${\HH}^{k}(A)$ for the Zariski sheaf on $X$ associated to the presheaf 
$U\mapsto H^{k}(U, A)$. 
This is the $k$-th higher direct image of the constant sheaf $A$ 
by the natural map $X_{cl}\to X_{Zar}$ from classical topology to Zariski topology. 
The unramified cohomology $H^{k}_{nr}(X, A)$ of degree $k$ with coefficients in $A$ 
is defined as the Zariski cohomology 
\begin{equation*}
H^{k}_{nr}(X, A) = H^{0}(X, {\HH}^{k}(A)). 
\end{equation*}

Bloch-Ogus \cite{B-O} computed the $E_{2}$-page of the coniveau spectral sequence (\cite{Gro}) 
and obtained a first-quadrant spectral sequence 
\begin{equation*}
E_{2}^{p,q}=H^{p}(X, {\HH}^{q}(A)) \; \Longrightarrow \; H^{p+q}(X, A). 
\end{equation*}
This coincides with the Leray spectral sequence for $X_{cl}\to X_{Zar}$, 
and converges to the coniveau filtration of $H^{p+q}(X, A)$ as defined in \eqref{eqn: coniveau filtration}. 
It was proved in \cite{B-O} that 
\begin{itemize}
\item $H^{p}(X, {\HH}^{q}(A))=0$ when $p>q$, and  
\item when $A={\Z}$ and $p=q$, 
$E_{2}^{p,p}=H^{p}(X, {\HH}^{p}({\Z}))$ is isomorphic to the group $CH^{p}(X)/\!\sim_{alg}$ 
of codimension $p$ cycles modulo algebraic equivalence. 
The edge map 
$E_{2}^{p,p}\to H^{2p}(X, {\Z})$ coincides with the cycle map. 
\end{itemize}

When $A={\Z}$, the sheaf ${\HH}^{q}({\Z})$ is torsion-free (\cite{CT-V}, \cite{BV}), 
as a consequence of the Bloch-Kato conjecture proved by Voevodsky and Rost \cite{Voe}. 
This implies that (\cite{CT-V}), for each natural number $n$, 
we have a short exact sequence of sheaves 
\begin{equation*}
0 \to {\HH}^{q}({\Z}) \stackrel{n}{\to} {\HH}^{q}({\Z}) \to {\HH}^{q}({\Z}/n) \to 0, 
\end{equation*}
and thus a short exact sequence of groups 
\begin{equation}\label{eqn: UCT}
0 \to H^{p}(X, {\HH}^{q}({\Z}))/n \to 
H^{p}(X, {\HH}^{q}({\Z}/n)) \to {}_{n}H^{p+1}(X, {\HH}^{q}({\Z})) \to 0.  
\end{equation}
Colliot-Th\'el\`ene and Voisin \cite{CT-V} proved that 
when ${\rm CH}_{0}(X)$ is supported on an algebraic subset of $X$ of dimension $<k$, 
then $H^{k}_{nr}(X, {\Z})$ vanishes and 
$H^{d-k}(X, {\HH}^{d}({\Z}))$ is annihilated by some natural number 
where $d=\dim X$. 
In particular, $H^{d-k}(X, {\HH}^{d}({\Z}))\otimes {\QZ}$ also vanishes.


\section{The filtration}\label{sec:Hnr}

\subsection{Proof of Theorem \ref{main1}}\label{ssec: Proof 1.1}

In this subsection we prove Theorem \ref{main1}. 
Let $X$ be a smooth complex projective variety. 
Let $k>1$. 
By the exact sequence \eqref{eqn: UCT} with $(p, q)=(0, k)$, we have an isomorphism 
\begin{equation*}
H^{k}_{nr}(X, {\Zn})/(H^{k}_{nr}(X, {\Z})/n) \simeq 
{}_{n}H^{1}(X, {\HH}^{k}({\Z})).  
\end{equation*}
When ${\rm CH}_{0}(X)$ is supported in dimension $<k$, 
this reduces to an isomorphism 
$H^{k}_{nr}(X, {\Zn}) \simeq {}_{n}H^{1}(X, {\HH}^{k}({\Z}))$.   
We introduce a filtration on ${}_{n}H^{1}(X, {\HH}^{k}({\Z}))$ 
through the Bloch-Ogus spectral sequence with ${\Z}$-coefficients. 

In what follows, we write $(E_{r}^{p,q}, d_{r})$ for the $E_{r}$-page of 
the Bloch-Ogus spectral sequence with ${\Z}$-coefficients. 
In particular, $E_2^{1,k}=H^1(X, {\HH}^k({\Z}))$. 
The construction is based on the following elementary observation. 

\begin{lemma}\label{lem: calculation spectral sequence}
We have the inclusion 
\begin{equation}\label{eqn: filtration 1}
E^{1,k}_{\infty} = E^{1,k}_{[k/2]+1} \subset E^{1,k}_{[k/2]} 
\subset \cdots \subset E^{1,k}_{3} \subset E^{1,k}_{2}. 
\end{equation}
The differential $d_{r}$ embeds 
$E_{r}^{1,k}/E_{r+1}^{1,k}$ into $E_{r}^{r+1, k-r+1}$. 
When $r>(k+1)/4$, this gives a complex 
\begin{equation}\label{eqn: dr complex 1}
E_{r}^{1,k}/E_{r+1}^{1,k} \stackrel{d_{r}}{\hookrightarrow} E_{r}^{r+1, k-r+1} \twoheadrightarrow E_{\infty}^{r+1, k-r+1}. 
\end{equation}
\end{lemma}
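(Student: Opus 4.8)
The plan is to read off every assertion from the shape of the first-quadrant Bloch-Ogus spectral sequence together with the vanishing $H^{p}(X,\HH^{q}(\Z))=0$ for $p>q$ recalled in \S\ref{sec:Bloch-Ogus}. Since the differential on the $r$-th page is $d_{r}\colon E_{r}^{p,q}\to E_{r}^{p+r,q-r+1}$, the term $E_{r}^{1,k}$ receives its incoming differential from $E_{r}^{1-r,k+r-1}$, which is zero for every $r\geq 2$ because the first index $1-r$ is negative on a first-quadrant sheet. Consequently $E_{r+1}^{1,k}=\ker(d_{r}\colon E_{r}^{1,k}\to E_{r}^{r+1,k-r+1})$, and this single observation simultaneously yields the descending chain of inclusions in \eqref{eqn: filtration 1} and shows that $d_{r}$ induces an injection $E_{r}^{1,k}/E_{r+1}^{1,k}\hookrightarrow E_{r}^{r+1,k-r+1}$.

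For the stabilization $E_{\infty}^{1,k}=E_{[k/2]+1}^{1,k}$, I would note that the target $E_{r}^{r+1,k-r+1}$ of the outgoing $d_{r}$ is a subquotient of $E_{2}^{r+1,k-r+1}=H^{r+1}(X,\HH^{k-r+1}(\Z))$, which vanishes by Bloch-Ogus as soon as $r+1>k-r+1$, i.e.\ $r>k/2$, i.e.\ $r\geq[k/2]+1$. Hence $d_{r}=0$ on $E_{r}^{1,k}$ for all such $r$, the chain stabilizes at this page, and $E_{\infty}^{1,k}=E_{[k/2]+1}^{1,k}$.

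It remains to produce the complex \eqref{eqn: dr complex 1}, and here the real content is the surjection $E_{r}^{r+1,k-r+1}\twoheadrightarrow E_{\infty}^{r+1,k-r+1}$ together with the vanishing of the composite. The delicate point is that $E_{\infty}^{r+1,k-r+1}$ is a priori only a subquotient, not a quotient, of $E_{r}^{r+1,k-r+1}$; it becomes a genuine quotient precisely when every outgoing differential $d_{s}\colon E_{s}^{r+1,k-r+1}\to E_{s}^{r+1+s,k-r+2-s}$ with $s\geq r$ vanishes, so that passing from page $r$ to $\infty$ only divides out incoming boundaries. By Bloch-Ogus the target of $d_{s}$ vanishes once $r+1+s>k-r+2-s$, that is $s>(k+1)/2-r$; the most restrictive case $s=r$ reads exactly $r>(k+1)/4$. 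Under this hypothesis all those outgoing differentials die, the desired surjection exists as the composite of the successive quotients $E_{r}^{r+1,k-r+1}\twoheadrightarrow E_{r+1}^{r+1,k-r+1}\twoheadrightarrow\cdots\twoheadrightarrow E_{\infty}^{r+1,k-r+1}$, and the image of the incoming $d_{r}$ --- which is precisely the image of the injection in \eqref{eqn: dr complex 1} --- is already killed at the first step $E_{r}\to E_{r+1}$. Thus the composite of the two maps is zero. I expect this last paragraph, specifically pinning down that the bound $r>(k+1)/4$ is exactly what upgrades the subquotient $E_{\infty}^{r+1,k-r+1}$ to a quotient of $E_{r}^{r+1,k-r+1}$, to be the only nonformal step; everything else is bookkeeping with the indices of a first-quadrant spectral sequence.
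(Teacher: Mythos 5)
Your proof is correct and follows essentially the same route as the paper: incoming differentials to $E_r^{1,k}$ vanish by the first-quadrant structure, stabilization and the vanishing of targets follow from the Bloch--Ogus bound $E_2^{p,q}=0$ for $p>q$, and the surjection $E_r^{r+1,k-r+1}\twoheadrightarrow E_\infty^{r+1,k-r+1}$ comes from the vanishing of all outgoing differentials at that position once $r>(k+1)/4$. The only cosmetic difference is that you verify the outgoing differentials die uniformly for all pages $s\geq r$, whereas the paper tracks pages $r$, $r+1$, and $\geq r+2$ separately (which lets it record that the only incoming images are $d_r(E_r^{1,k})$ and $d_{r+1}(E_{r+1}^{0,k+1})$, a fact used in the subsequent remark on exactness); this does not affect the validity of your argument for the lemma as stated.
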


\begin{proof}
Since $E_{r}^{p,q}=0$ in $p<0$, 
the differential $d_{r}$ that hits to $E_{r}^{1,k}$ is the zero map. 
(Note that $r\geq 2$.) 
This shows that  
\begin{equation*}
E_{r+1}^{1,k} = {\ker}(d_{r}: E_{r}^{1,k}\to E_{r}^{r+1,k-r+1}) 
\end{equation*}
is a subgroup of $E_{r}^{1,k}$, 
and $d_{r}$ induces an injective map 
$E_{r}^{1,k}/E_{r+1}^{1,k}\hookrightarrow E_{r}^{r+1, k-r+1}$. 
When $r>k/2$, 
we have $r+1>k-r+1$, 
and so $E_{r}^{r+1, k-r+1}=0$. 
Therefore 
$E_{r}^{1,k}=E_{r+1}^{1,k}=\cdots = E_{\infty}^{1,k}$. 

When $r>(k+1)/4$, we have $2r+1>k-2r+2$, 
so the next differential 
\begin{equation*}
d_{r} : E_{r}^{r+1,k-r+1}\to E_{r}^{2r+1,k-2r+2}=0 
\end{equation*}
is the zero map. 
This shows that 
$E_{r+1}^{r+1,k-r+1}=E_{r}^{r+1,k-r+1}/d_{r}(E_{r}^{1,k})$. 
Similarly, 
$E_{r+2}^{r+1,k-r+1}=E_{r+1}^{r+1,k-r+1}/d_{r+1}(E_{r+1}^{0,k+1})$ 
is a quotient of $E_{r+1}^{r+1,k-r+1}$ with 
$E_{r+2}^{r+1,k-r+1}=E_{\infty}^{r+1,k-r+1}$. 
This proves the last assertion. 
\end{proof}

\begin{remark}
By the proof, when $E_{2}^{0,k+1}=0$, 
e.g., when ${\rm CH}_{0}(X)$ is supported in dimension $<k+1$, 
the complex \eqref{eqn: dr complex 1} is exact, so we have an exact sequence 
\begin{equation*}
0 \to E_{r+1}^{1,k} \to E^{1,k}_{r} \to E_{r}^{r+1,k-r+1} \to E_{\infty}^{r+1,k-r+1} \to 0. 
\end{equation*} 
\end{remark}

Theorem \ref{main1} is deduced as follows. 

\begin{proof}[(Proof of Theorem \ref{main1})]
We take the $n$-torsion part of the filtration \eqref{eqn: filtration 1}. 
This gives the filtration 
\begin{equation}\label{eqn: filtration 2}
{}_{n}E^{1,k}_{\infty} = {}_{n}E^{1,k}_{[k/2]+1} \subset {}_{n}E^{1,k}_{[k/2]} 
\subset \cdots \subset {}_{n}E^{1,k}_{3} \subset {}_{n}E^{1,k}_{2} 
\end{equation}
on ${}_{n}E^{1,k}_{2}={}_{n}H^{1}(X, {\HH}^{k}({\Z}))$ with 
\begin{equation*}
{}_{n}E^{1,k}_{\infty} = 
{}_{n}(N^{1}H^{k+1}(X, {\Z})/N^{2}H^{k+1}(X, {\Z})) = 
{}_{n}(H^{k+1}(X, {\Z})/N^{2}H^{k+1}(X, {\Z})). 
\end{equation*}
Here the second equality holds because 
\begin{equation*}
H^{k+1}(X, {\Z})/N^{1}H^{k+1}(X, {\Z}) = E_{\infty}^{0,k+1} \subset H^{k+1}_{nr}(X, {\Z}) 
\end{equation*}
is torsion-free. 
The differential $d_{r}$ induces a map 
${}_{n}E_{r}^{1,k}/{}_{n}E_{r+1}^{1,k} \to {}_{n}E_{r}^{r+1, k-r+1}$. 
This is still injective because 
\begin{equation}\label{eqn: still injective}
{}_{n}E_{r+1}^{1,k} = {}_{n}E_{r}^{1,k}\cap E_{r+1}^{1,k} 
= {}_{n}E_{r}^{1,k} \cap {\rm Ker}(d_{r}). 
\end{equation}
Hence the complex \eqref{eqn: dr complex 1} gives rise to a complex 
\begin{equation*}\label{eqn: dr complex 2}
{}_{n}E_{r}^{1,k}/{}_{n}E_{r+1}^{1,k} \hookrightarrow 
{}_{n}E_{r}^{r+1, k-r+1} \to {}_{n}E_{\infty}^{r+1, k-r+1}. 
\end{equation*}

When $k=2p$ is even, we take $r=p$. 
Since $E_{p+1}^{1,2p}=E_{\infty}^{1,2p}$, 
this complex takes the form  
\begin{equation*}\label{eqn: dr complex 2}
{}_{n}E_{p}^{1,2p}/{}_{n}E_{\infty}^{1,2p} \hookrightarrow 
{}_{n}E_{p}^{p+1,p+1} \to {}_{n}N^{p+1}H^{2p+2}(X, {\Z}). 
\end{equation*}
Since the composition 
\begin{equation*}
CH^{p+1}(X)/\!\sim_{alg} \, = \, E_{2}^{p+1,p+1}\twoheadrightarrow E_{p}^{p+1,p+1} 
\twoheadrightarrow N^{p+1}H^{2p+2}(X, {\Z}) 
\end{equation*}
is the cycle map, we find that 
the embedding image of ${}_{n}E_{p}^{1,2p}/{}_{n}E_{\infty}^{1,2p}$ in 
${}_{n}E_{p}^{p+1,p+1}\subset E_{p}^{p+1,p+1}$ 
is contained in the image of 
${\rm Griff}^{p+1}(X)\subset E_{2}^{p+1,p+1}$ 
in $E_{p}^{p+1,p+1}$. 

Finally, we take the direct limit with respect to $n$. 
Then \eqref{eqn: filtration 2} gives the filtration 
\begin{equation*}\label{eqn: filtration 3}
{}_{tor}E^{1,k}_{\infty} = {}_{tor}E^{1,k}_{[k/2]+1} \subset {}_{tor}E^{1,k}_{[k/2]} 
\subset \cdots \subset {}_{tor}E^{1,k}_{3} \subset {}_{tor}E^{1,k}_{2} 
\end{equation*}
on 
\begin{equation*}
{}_{tor}E^{1,k}_{2} = {}_{tor}H^{1}(X, {\HH}^{k}({\Z})) = 
H^{k}_{nr}(X, {\QZ})/(H^{k}_{nr}(X, {\Z})\otimes {\QZ}) 
\end{equation*}
with 
\begin{equation*}
{}_{tor}E^{1,k}_{\infty} = {}_{tor}(H^{k+1}(X, {\Z})/N^{2}H^{k+1}(X, {\Z})). 
\end{equation*}
This is the filtration $(F_{i})$ in Theorem \ref{main1}. 
When $k=2p$ is even and $r=p$, 
the preceding paragraph shows that 
the injective image of 
${}_{tor}E^{1,k}_{p}/{}_{tor}E^{1,k}_{\infty}$ 
in $E^{p+1,p+1}_{p}$ is contained in the image of ${\rm Griff}^{p+1}(X)$, 
and hence is a subquotient of ${\rm Griff}^{p+1}(X)$.  
This proves Theorem \ref{main1}. 
\end{proof}

\begin{remark}
We also have a more direct filtration on $H^{k}_{nr}(X, {\Zn})$ 
through the Bloch-Ogus spectral sequence with ${\Zn}$-coefficients: 
\begin{equation*}
E^{0,k}_{\infty, {\Zn}} = E_{[(k+3)/2], {\Zn}}^{0,k} \subset \cdots \subset 
E_{3, {\Zn}}^{0,k} \subset E_{2, {\Zn}}^{0,k},  
\end{equation*}
where ${\Zn}$ in the subscript represents the coefficients. 
The first group $E^{0,k}_{\infty, {\Zn}}$ is 
$H^{k}(X, {\Zn})/N^{1}H^{k}(X, {\Zn})$, 
and when $k=2p-1$ is odd, the next graded piece is a subquotient 
of the kernel of 
$(CH^{p}(X)/\!\sim_{alg})/n \to H^{2p}(X, {\Zn})$. 

When $H^{k}_{nr}(X, {\Zn})\simeq {}_{n}H^{1}(X, {\HH}^{k}({\Z}))$, 
we have $E^{0,k}_{3,{\Zn}}\subset {}_{n}E_{3,{\Z}}^{1,k}$ 
because the second map in \eqref{eqn: UCT} 
anti-commutes with $d_{2}$ (see \cite{Ma} p.1510 -- p.1511).  
\end{remark}

\subsection{Higher Artin-Mumford invariant}

As explained in \S \ref{sec:intro}, 
the subgroup 
\begin{equation}\label{eqn: generalized AM inv}
{}_{tor}(H^{l}(X, {\Z})/N^{2}H^{l}(X, {\Z})) 
\end{equation}
of $H^{l-1}_{nr}(X, {\QZ})$ is a generalization of 
the Artin-Mumford invariant \cite{A-M} and the Colliot-Th\'el\`ene--Voisin invariant \cite{CT-V}. 
This can be nontrivial only when $l\leq \dim X+1$. 
An analogous invariant in the range $l>\dim X +1$ will be considered in \S \ref{sec:homology}. 

\begin{example}\label{ex: c.i.}
Let $X\subset {\proj}^N$ be a smooth complete intersection of dimension $d\geq 3$. 
The group \eqref{eqn: generalized AM inv} may be nontrivial only when 
(i) $l=d$ and (ii) $l=d+1$ with $d$ odd. 

Indeed, let $h=c_1(\mathcal{O}_{X}(1))$. 
By the Lefschetz hyperplane theorem,  
$H^{l}(X, {\Z})=0$ for odd $l<d$ and 
$H^{l}(X, {\Z})={\Z}h^{l/2}$ for even $l<d$. 
Thus $H^{l}(X, {\Z})/N^{2}H^{l}(X, {\Z})$ is trivial when $l<d$, $l\ne 2$,  
and has no torsion when $l=2$. 
When $l=d+1$, we have 
\begin{equation}\label{eqn: PD+Lef}
H^{d+1}(X, {\Z}) \simeq H_{d-1}(X, {\Z}) \simeq H_{d-1}({\proj}^N, {\Z}) 
\end{equation}
by Poincare duality and the Lefschetz hyperplane theorem for homology. 
Thus $H^{d+1}(X, {\Z})=0$ when $d$ is even. 
When $d$ is odd, this shows that 
$H^{d+1}(X, {\Z})$ is free of rank $1$ generated by ${\rm deg}(X)^{-1}h^{(d+1)/2}$. 
Since 
\begin{equation*}
{\Z}h^{(d+1)/2} \subset N^{(d+1)/2}H^{d+1}(X, {\Z}) \subset N^{2}H^{d+1}(X, {\Z}) \subset H^{d+1}(X, {\Z}), 
\end{equation*}
we see that 
$H^{d+1}(X, {\Z})/N^{2}H^{d+1}(X, {\Z})$ is cyclic of order dividing ${\rm deg}(X)$. 
It is the most transcendental part of the defect 
$H^{d+1}(X, {\Z})/N^{(d+1)/2}H^{d+1}(X, {\Z})$ of the integral Hodge conjecture for $H^{d+1}(X, {\Z})$. 
\end{example}

The invariant \eqref{eqn: generalized AM inv} is related to 
the cohomological integral decomposition of the diagonal (\cite{Vo2}) as follows. 
This is a generalization of \cite{Vo3} Proposition 4.9. 

\begin{proposition}\label{prop: diagonal decomposition}
If $X$ admits a cohomological integral decomposition of the diagonal, 
then $H^{l}(X, {\Z})/N^{2}H^{l}(X, {\Z})$ has no torsion. 
\end{proposition}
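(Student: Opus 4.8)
The plan is to transport the statement into the Bloch--Ogus spectral sequence used in the proof of Theorem \ref{main1} and then to exploit the correspondence action of the diagonal. Write $d=\dim X$ and recall that a cohomological integral decomposition of the diagonal is an equality $[\Delta_X]=[X\times x]+[Z]$ in $H^{2d}(X\times X,\Z)$, where $x\in X$ is a point and $Z$ is supported on $D\times X$ for some divisor $D\subsetneq X$. Choosing a resolution $\nu\colon\tilde{D}\to X$ of $D$ and writing $\tilde{Z}$ for the induced cycle on $\tilde{D}\times X$, the two terms act on cohomology as follows: $[X\times x]_{\ast}$ factors through a point and hence vanishes in all positive degrees, while $[Z]_{\ast}=[\tilde{Z}]_{\ast}\circ\nu^{\ast}$ factors through $\tilde{D}$. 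Since $[\Delta_X]_{\ast}$ is the identity, we obtain $\mathrm{id}=[\tilde{Z}]_{\ast}\circ\nu^{\ast}$; in particular every group on which these correspondences act becomes a direct summand, via $\nu^{\ast}$, of the corresponding group for the lower-dimensional variety $\tilde{D}$.

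First I would reduce to a statement about a single graded piece. As in \S\ref{ssec: Proof 1.1}, the quotient $H^{l}(X,\Z)/N^{1}H^{l}(X,\Z)=E^{0,l}_{\infty}$ embeds into $H^{l}_{nr}(X,\Z)$ and is therefore torsion-free. Hence in the extension $0\to E^{1,l-1}_{\infty}\to H^{l}(X,\Z)/N^{2}H^{l}(X,\Z)\to E^{0,l}_{\infty}\to 0$ the torsion of the middle term coincides with that of $E^{1,l-1}_{\infty}$, and $E^{1,l-1}_{\infty}$ is a subgroup of $E^{1,l-1}_{2}=H^{1}(X,\mathcal{H}^{l-1}(\Z))$ by Lemma \ref{lem: calculation spectral sequence}. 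Thus it suffices to prove that $H^{1}(X,\mathcal{H}^{l-1}(\Z))$ is torsion-free, which by the exact sequence \eqref{eqn: UCT} with $(p,q)=(0,l-1)$ is equivalent to the surjectivity of $H^{l-1}_{nr}(X,\Z)/n\to H^{l-1}_{nr}(X,\Z/n)$ for every $n$.

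To establish this surjectivity I would run the correspondence action on Bloch--Ogus cohomology, where $\Z$- and $\Z/n$-coefficients and the sequence \eqref{eqn: UCT} are all compatible with $\nu^{\ast}$ and $[\tilde{Z}]_{\ast}$. Because $[X\times x]_{\ast}$ kills $H^{p}(X,\mathcal{H}^{q})$ for $p\geq 1$ and, for $q\geq 1$, also the unramified groups $H^{q}_{nr}$, the identity equals $[\tilde{Z}]_{\ast}\circ\nu^{\ast}$ on each of them. Given $\xi\in H^{l-1}_{nr}(X,\Z/n)$ one then has $\xi=[\tilde{Z}]_{\ast}(\nu^{\ast}\xi)$ with $\nu^{\ast}\xi\in H^{l-1}_{nr}(\tilde{D},\Z/n)$, and the task becomes to produce an integral class on $\tilde{D}$ whose image under $[\tilde{Z}]_{\ast}$ reduces to $\xi$. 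Equivalently, the cokernel of $H^{l-1}_{nr}(X,\Z)/n\to H^{l-1}_{nr}(X,\Z/n)$, namely ${}_{n}H^{1}(X,\mathcal{H}^{l-1}(\Z))$, is a direct summand of ${}_{n}H^{1}(\tilde{D},\mathcal{H}^{l-1}(\Z))$, so everything is reduced to controlling this torsion on the divisor $\tilde{D}$.

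The hard part, and the place where a cohomological (rather than Chow-theoretic) decomposition is exactly enough, is this last step: unlike a universally trivial $\mathrm{CH}_{0}$, a cohomological decomposition does not make $H^{l-1}_{nr}$ vanish, and the auxiliary variety $\tilde{D}$ carries no decomposition of its own, so one cannot simply induct on dimension. The extra input is that $\nu$ has codimension one: a torsion class on $\tilde{D}$ lies in $N^{1}H^{\ast}(\tilde{D})$ by the torsion-freeness of $\mathcal{H}^{\ast}(\Z)$ (Bloch--Kato, \S\ref{sec:Bloch-Ogus}), hence is supported on a subvariety of $\tilde{D}$ of codimension one, which is codimension two in $X$. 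I would make this precise through the Gysin/localization sequence for the sheaves $\mathcal{H}^{\ast}$ attached to the divisor $\tilde{D}\subset X$, showing that the summand ${}_{n}H^{1}(X,\mathcal{H}^{l-1}(\Z))$ is forced into the coniveau-two part and therefore already vanishes in $E^{1,l-1}_{\infty}=N^{1}H^{l}/N^{2}H^{l}$. Carrying out this coniveau bookkeeping compatibly with the torsion-freeness of the unramified sheaves is the main technical obstacle; the geometry of $Z$ alone yields only coniveau one, and it is precisely the combination of codimension-one support with Bloch--Kato that supplies the second step.
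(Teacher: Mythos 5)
The slogan you arrive at in your last paragraph (Bloch--Kato torsion-freeness plus ``codimension one in $\tilde{D}$ is codimension two in $X$'') is exactly the engine of the paper's proof, but your implementation has two genuine gaps. The first is that you let the cohomological identity $[\Delta_X]=[X\times x]+[Z]$ act on the Bloch--Ogus groups $H^{p}(X,\mathcal{H}^{q})$ and on $H^{l-1}_{nr}(X,\Z/n)$, asserting $\mathrm{id}=[\tilde{Z}]_{*}\circ\nu^{*}$ there. The hypothesis is an equality of \emph{cohomology classes} only; it forces equal actions on singular cohomology (and, by \cite{A-K}, compatibility with the rational coniveau filtration), but two correspondences with the same class in $H^{2d}(X\times X,\Z)$ can act differently on $H^{p}(X,\mathcal{H}^{q})$: these groups contain precisely the Griffiths-type information that cohomology classes do not see, which is the whole point of Theorem~\ref{main1}. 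Relatedly, your intermediate reduction overshoots: torsion-freeness of $H^{1}(X,\mathcal{H}^{l-1}(\Z))$ is, by \eqref{eqn: UCT}, equivalent to surjectivity of $H^{l-1}_{nr}(X,\Z)/n\to H^{l-1}_{nr}(X,\Z/n)$ for all $n$, i.e.\ (for small $\mathrm{CH}_{0}$) to the vanishing of essentially all of $H^{l-1}_{nr}(X,\QZ)$ --- a statement the proposition does not make and which a merely cohomological decomposition cannot be expected to give, since it would kill all the higher pieces $F_{i}/F_{i-1}$ as well. The paper avoids both issues by never leaving singular cohomology: it works with a single class $\alpha\in H^{l}(X,\Z)$ that is torsion modulo $N^{2}$.

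The second gap is the direction of your factorization, and it is fatal even for a singular-cohomology version of your argument. With $Z\subset D\times X$ and $[Z]_{*}=[\tilde{Z}]_{*}\circ\nu^{*}$, the divisor sits on the pullback side: one does get $\nu^{*}\alpha\in N^{1}H^{l}(\tilde{D},\Z)$ from Bloch--Kato, but then $\alpha=[\tilde{Z}]_{*}\nu^{*}\alpha$, and the correspondence $[\tilde{Z}]_{*}\colon H^{l}(\tilde{D})\to H^{l}(X)$ does not raise coniveau: integrally it need not respect coniveau at all, and even rationally the bookkeeping of \cite{A-K} (pullback preserves $N^{a}$, cup product with a codimension $d-1$ algebraic class adds $d-1$, pushforward along the projection of relative dimension $d-1$ subtracts $d-1$) sends $N^{1}H^{l}(\tilde{D},\Q)$ only into $N^{1}H^{l}(X,\Q)$, never into $N^{2}$. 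So your route terminates with $\alpha\in N^{1}$, which carries no new information. The paper uses the transposed decomposition, $Z\subset X\times D$ with $[Z]_{*}=j_{*}\circ[\tilde{Z}]_{*}$: first the correspondence $H^{l}(X)\to H^{l-2}(\tilde{D})$, where $\bar{\alpha}\in N^{2}H^{l}(X,\Q)$ gives $[\tilde{Z}]_{*}\bar{\alpha}\in N^{1}H^{l-2}(\tilde{D},\Q)$ by \cite{A-K}; then torsion-freeness of $H^{l-2}(\tilde{D},\Z)/N^{1}H^{l-2}(\tilde{D},\Z)$ upgrades this integrally to $[\tilde{Z}]_{*}\alpha\in N^{1}H^{l-2}(\tilde{D},\Z)$; and only then does the honest Gysin pushforward $j_{*}$ along $\tilde{D}\to X$ convert codimension-one supports in $\tilde{D}$ into codimension-two supports in $X$, giving $j_{*}[\tilde{Z}]_{*}\alpha\in N^{2}H^{l}(X,\Z)$. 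The coniveau gain is available only because the \emph{last} map is a pushforward along a morphism from a divisor; a general correspondence out of $\tilde{D}$ cannot play that role. Hence the difficulty you defer as ``coniveau bookkeeping'' is not a technicality: with your ordering of the maps the required estimate is false.
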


\begin{proof}
The assumption means that 
there exists a $d$-dimensional cycle $Z$ on $X\times D$ for some divisor $D\subset X$ such that  
\begin{equation*}\label{eqn: diagonal decomposition}
[\Delta_{X}] = [ \{ pt \} \times X ] + [Z] \qquad \textrm{in} \: \: H^{2d}(X\times X, {\Z})  
\end{equation*}
where $d=\dim X$ 
and $\Delta_{X} \subset X \times X$ is the diagonal. 
Let $j\colon \tilde{D}\to D\hookrightarrow X$ be a desingularization of $D$. 
As in the proof of \cite{Vo3} Proposition 4.9, 
we may assume that $Z$ lifts to a $d$-dimensional cycle $\tilde{Z}$ on $X \times \tilde{D}$ with 
\begin{equation}\label{eqn: diagonal decomposition 2}
[\Delta_{X}] = [ \{ pt \} \times X ] + ({\rm id}_{X}, j)_{\ast}[\tilde{Z}] \qquad \textrm{in} \: \: H^{2d}(X\times X, {\Z}).   
\end{equation}

Let $\alpha$ be an element of  $H^{l}(X, {\Z})$ which is torsion modulo $N^{2}H^{l}(X, {\Z})$. 
We let the correspondence \eqref{eqn: diagonal decomposition 2} act on $\alpha$. 
Then $[ \{ pt \} \times X ]$ annihilates $\alpha$. 
We shall show that $({\rm id}_{X}, j)_{\ast}[\tilde{Z}]$ sends 
$\alpha$ to an element of $N^2H^l(X, {\Z})$, 
which then implies $\alpha\in N^{2}H^{l}(X, {\Z})$. 

Let $\bar{\alpha}$ be the image of $\alpha$ in $H^{l}(X, {\Q})$. 
By the assumption we have $\bar{\alpha}\in N^2H^l(X, {\Q})$. 
Since rational coniveau filtration is functorial (\cite{A-K}), 
we see that 
\begin{equation*}
[\tilde{Z}]_{\ast}\bar{\alpha} \in N^1H^{l-2}(\tilde{D}, {\Q}) = N^1H^{l-2}(\tilde{D}, {\Z})\otimes {\Q}. 
\end{equation*}
This means that 
$[\tilde{Z}]_{\ast}\alpha \in H^{l-2}(\tilde{D}, {\Z})$ 
is torsion modulo $N^1H^{l-2}(\tilde{D}, {\Z})$. 
But since $H^{l-2}(\tilde{D}, {\Z})/N^1H^{l-2}(\tilde{D}, {\Z})$ is torsion-free, 
we find that 
$[\tilde{Z}]_{\ast}\alpha \in N^1H^{l-2}(\tilde{D}, {\Z})$. 
Therefore 
$j_{\ast}([\tilde{Z}]_{\ast}\alpha) \in N^2H^{l}(X, {\Z})$.  
\end{proof}

\subsection{Odd degree}\label{ssec: odd degree}

We give few remarks on the case when $k=2p-1$ is odd.  
In this case, the role of ${\rm Griff}^{p+1}(X)$ for controlling $F_{2}/F_{1}$ 
is replaced by the kernel of the edge map 
\begin{equation*}\label{eqn: edge map odd}
\alpha : H^{p}(X, {\HH}^{p+1}({\Z})) \to N^{p}H^{2p+1}(X, {\Z}) \subset H^{2p+1}(X, {\Z}). 
\end{equation*}
The group $H^{p}(X, {\HH}^{p+1}({\Z}))$ 
is related to the higher Chow group ${\rm CH}^{p+1}(X, 1)$ as follows. 

Recall (\cite{EV-MS}, \cite{Ke}) that we have a natural isomorphism 
\begin{equation*}
{\rm CH}^{p+1}(X, 1)\simeq H^{p}(X, \mathcal{K}^{M}_{p+1}), 
\end{equation*} 
where $\mathcal{K}^{M}_{p+1}$ is the Zariski sheaf associated to the Milnor $K$-theory. 
We also have homomorphisms of Zariski sheaves (see \cite{Es}, \cite{BV}) 
\begin{equation*}
\mathcal{K}_{p+1}^{M}\to {\HH}_{\mathcal{D}}^{p+1}(p+1) \to {\HH}^{p+1}({\Z}), 
\end{equation*}
where ${\HH}_{\mathcal{D}}^{q}(r)$ is the Deligne-Beilinson cohomology sheaf. 
This gives the maps 
\begin{equation*}
\xymatrix{
{\rm CH}^{p+1}(X, 1) \ar@{->>}[r] & H^{p}(X, {\HH}^{p+1}_{\mathcal{D}}(p+1)) \ar[r] \ar[d] & H^{p}(X, {\HH}^{p+1}({\Z})) \ar[d]^{\alpha} \\ 
           &    H^{2p+1}_{D}(X, {\Z}(p+1)) \ar[r]    &    H^{2p+1}(X, {\Z})
}
\end{equation*}
where the vertical maps are the edge maps in the respective spectral sequences. 
The composition 
${\rm CH}^{p+1}(X, 1) \to H^{2p+1}_{D}(X, {\Z}(p+1))$ 
is the so-called regulator map (see, e.g., \cite{MS} and the references therein), 
and the composition 
${\rm CH}^{p+1}(X, 1) \to H^{2p+1}(X, {\Z})$ 
gives the cycle map. 
The map ${\rm CH}^{p+1}(X, 1) \to H^{p}(X, {\HH}^{p+1}_{\mathcal{D}}(p+1))$ 
is surjective because 
the Gersten resolution of $\mathcal{K}_{p+1}^{M}$ and that of ${\HH}^{p+1}_{\mathcal{D}}(p+1)$ 
have the same last two terms. 

\begin{lemma}\label{lem: torsion cycle class}
The image of ${\rm CH}^{p+1}(X, 1) \to H^{2p+1}(X, {\Z})$ 
is a finite subgroup of $N^{p}H^{2p+1}(X, {\Z})$. 
\end{lemma}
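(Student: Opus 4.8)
The plan is to read off both assertions directly from the commutative diagram preceding the statement, using two essentially independent inputs: the coniveau localization built into the edge map $\alpha$, and a Hodge-theoretic vanishing in odd weight. First I would observe that the cycle map ${\rm CH}^{p+1}(X,1)\to H^{2p+1}(X,{\Z})$ factors as the top composition ${\rm CH}^{p+1}(X,1)\twoheadrightarrow H^{p}(X,{\HH}^{p+1}_{\mathcal{D}}(p+1))\to H^{p}(X,{\HH}^{p+1}({\Z}))$ followed by $\alpha$, so its image is contained in $\im(\alpha)$. Since $\alpha$ is by construction the edge homomorphism $H^{p}(X,{\HH}^{p+1}({\Z}))=E_{2}^{p,p+1}\to H^{2p+1}(X,{\Z})$ of the Bloch-Ogus spectral sequence, whose image lies in $N^{p}H^{2p+1}(X,{\Z})$, this already yields the coniveau containment and reduces the lemma to proving that the image is finite.

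For finiteness I would instead run the same cycle map along the other edge of the diagram. By commutativity the composite equals the regulator ${\rm CH}^{p+1}(X,1)\to H^{2p+1}_{D}(X,{\Z}(p+1))$ followed by the forgetful map $H^{2p+1}_{D}(X,{\Z}(p+1))\to H^{2p+1}(X,{\Z})$ to Betti cohomology. It therefore suffices to show that the image of this forgetful map is torsion: since $H^{2p+1}(X,{\Z})$ is finitely generated, a torsion subgroup is automatically finite, and the lemma follows by combining this with the previous paragraph.

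The hard part, and the one genuinely nontrivial step, is this last Hodge-theoretic point. From the defining exact sequence of Deligne cohomology, the image of $H^{2p+1}_{D}(X,{\Z}(p+1))\to H^{2p+1}(X,{\Z}(p+1))$ consists of those integral classes whose complexification lies in $F^{p+1}H^{2p+1}(X,{\C})$. The complexification of an integral class is moreover fixed by the real structure; writing such a class as $(2\pi i)^{p+1}\delta$ with $\delta\in H^{2p+1}(X,{\R})$, and using that scaling preserves the Hodge filtration, the condition becomes $\delta\in F^{p+1}\cap\overline{F^{p+1}}$. Because $H^{2p+1}(X)$ has odd weight $2p+1$, the subspaces $F^{p+1}=\bigoplus_{a\geq p+1}H^{a,b}$ and $\overline{F^{p+1}}=\bigoplus_{a\leq p}H^{a,b}$ meet only in $0$, so $\delta=0$ and the class is torsion. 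This is standard once the Hodge decomposition is in hand, and everything surrounding it is a diagram chase. I would conclude that the image of ${\rm CH}^{p+1}(X,1)\to H^{2p+1}(X,{\Z})$ is a finitely generated torsion group, hence finite, and is contained in $N^{p}H^{2p+1}(X,{\Z})$.
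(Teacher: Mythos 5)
Your proof is correct and follows essentially the same route as the paper's: finiteness is obtained by factoring the cycle map through Deligne cohomology via the regulator, identifying the image of $H^{2p+1}_{D}(X,{\Z}(p+1))\to H^{2p+1}(X,{\Z})$ with the integral classes in $F^{p+1}$, and invoking $F^{p+1}\cap\overline{F^{p+1}}=0$ in odd weight to conclude this image is torsion, hence finite. The containment in $N^{p}H^{2p+1}(X,{\Z})$, which the paper leaves implicit in the construction of the edge map $\alpha$, is handled by you in exactly the same way (via the factorization through $\alpha$), so the two arguments coincide.
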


\begin{proof}
The image of $H^{2p+1}_{D}(X, {\Z}(p+1)) \to H^{2p+1}(X, {\Z})$ is 
$F^{p+1}\cap H^{2p+1}(X, {\Z})$ where $(F^k)$ is the Hodge filtration of $H^{2p+1}(X, {\C})$. 
Since $F^{p+1}\cap \overline{F^{p+1}} = 0$, 
this is the torsion part of $H^{2p+1}(X, {\Z})$. 
\end{proof}

Let ${\rm CH}^{p+1}(X, 1)_{{\rm hom}}$ be the kernel of ${\rm CH}^{p+1}(X, 1) \to H^{2p+1}(X, {\Z})$. 
Then we obtain a map 
\begin{equation*}
\beta : {\rm CH}^{p+1}(X, 1)_{{\rm hom}} \to {\ker}(\alpha). 
\end{equation*} 
This is an analogue of 
${\rm CH}^{p+1}(X)_{{\rm hom}} \to {\rm Griff}^{p+1}(X)$, 
but may be no longer surjective in general as the following proposition shows.  
Let $A^{p+1}(X)\subset {\rm CH}^{p+1}(X)$ 
be the subgroup of cycles algebraically equivalent to zero. 
The Abel-Jacobi map on the torsion part of $A^{p+1}(X)$ takes the form 
\begin{equation*}
{}_{tor}AJ : {}_{tor}A^{p+1}(X) \to N^{p}H^{2p+1}(X, {\Z})\otimes {\QZ}. 
\end{equation*}

\begin{proposition}\label{prop: k odd higher chow}
We have a surjective map 
${\rm Coker}(\beta)\otimes {\QZ}\to {\ker}({}_{tor}AJ)$ 
with finite kernel. 
In particular, ${\rm Coker}(\beta)\otimes {\QZ} \ne 0$ if and only if ${\ker}({}_{tor}AJ)\ne 0$. 
\end{proposition}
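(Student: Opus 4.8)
The plan is to compute both ${\coker}(\beta)\otimes{\QZ}$ and ${\ker}({}_{tor}AJ)$ from a single short exact sequence, obtained by comparing the Gersten/Bloch--Ogus presentations of $G:=H^{p}(X,{\HH}^{p+1}(\Z))$ with $\Z$-coefficients and those of the Milnor $K$-theory sheaf $\mathcal{K}^{M}_{p+1}$. Let $q\colon {\rm CH}^{p+1}(X,1)\to G$ be the composite through $H^{p}(X,{\HH}^{p+1}_{\mathcal{D}}(p+1))$, so that ${\im}(q)={\im}(\delta)$ where $\delta$ is induced by ${\HH}^{p+1}_{\mathcal{D}}(p+1)\to{\HH}^{p+1}(\Z)$. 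First I would record that, since $\alpha\circ q$ is the cycle map with (finite) image $T:={\im}({\rm CH}^{p+1}(X,1)\to H^{2p+1}(X,\Z))$ of Lemma \ref{lem: torsion cycle class} and $\alpha$ surjects onto $N^{p}H^{2p+1}(X,\Z)$, the edge map induces a short exact sequence
\begin{equation*}
0 \to {\coker}(\beta) \to {\coker}(\delta) \xrightarrow{\bar\alpha} N^{p}H^{2p+1}(X,\Z)/T \to 0,
\end{equation*}
because ${\im}(\beta)={\ker}(\alpha)\cap{\im}(q)$ identifies ${\coker}(\beta)$ with ${\ker}(\bar\alpha)$ inside ${\coker}(\delta)=G/{\im}(q)$.

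Next I would identify ${\coker}(\delta)/n$ with torsion cycles. By the Bloch--Kato theorem the comparison map gives an isomorphism of sheaves ${\HH}^{p+1}(\Z/n)\cong\mathcal{K}^{M}_{p+1}/n$, hence $H^{p}(X,{\HH}^{p+1}(\Z/n))\cong H^{p}(X,\mathcal{K}^{M}_{p+1}/n)$. The sequence $0\to\mathcal{K}^{M}_{p+1}\xrightarrow{n}\mathcal{K}^{M}_{p+1}\to\mathcal{K}^{M}_{p+1}/n\to 0$, with $H^{p}(X,\mathcal{K}^{M}_{p+1})={\rm CH}^{p+1}(X,1)$ and $H^{p+1}(X,\mathcal{K}^{M}_{p+1})={\rm CH}^{p+1}(X)$, yields a short exact sequence parallel to \eqref{eqn: UCT} for ${\HH}^{p+1}(\Z)$. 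Comparing the two through the Bloch--Kato isomorphism on the middle terms, the left vertical is $q\bmod n$ and the right vertical is the reduction ${}_{n}{\rm CH}^{p+1}(X)\to{}_{n}({\rm CH}^{p+1}(X)/\!\sim_{alg})$. The snake lemma, whose middle map is an isomorphism, forces the right vertical to be surjective and produces
\begin{equation*}
{\coker}(\delta)/n = {\coker}(q\bmod n) \cong {\ker}\bigl({}_{n}{\rm CH}^{p+1}(X)\to{}_{n}({\rm CH}^{p+1}(X)/\!\sim_{alg})\bigr) = {}_{n}A^{p+1}(X).
\end{equation*}

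Then I would pass to the limit. Taking the colimit over $n$ gives ${\coker}(\delta)\otimes{\QZ}\cong{}_{tor}A^{p+1}(X)$, and under this identification the map induced by $\bar\alpha$ on ${\QZ}$-coefficients becomes the torsion Abel--Jacobi map ${}_{tor}AJ$. Tensoring the sequence of the first step with ${\QZ}$ and using that $T$ and the torsion of $N^{p}H^{2p+1}(X,\Z)$ are finite, the long exact sequence of $\mathrm{Tor}(-,{\QZ})$ and $-\otimes{\QZ}$ reads
\begin{equation*}
(\text{finite}) \to {\coker}(\beta)\otimes{\QZ} \to {}_{tor}A^{p+1}(X) \xrightarrow{{}_{tor}AJ} N^{p}H^{2p+1}(X,\Z)\otimes{\QZ} \to 0.
\end{equation*}
Exactness at ${}_{tor}A^{p+1}(X)$ shows that ${\coker}(\beta)\otimes{\QZ}\to{\ker}({}_{tor}AJ)$ is surjective with kernel the image of a finite group, hence finite; since ${\coker}(\beta)\otimes{\QZ}$ is divisible, such a group cannot be killed by a finite kernel when nonzero, which gives the stated equivalence.

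The hard part will be the compatibilities underlying the second and third steps: that the Bloch--Kato isomorphism ${\HH}^{p+1}(\Z/n)\cong\mathcal{K}^{M}_{p+1}/n$ carries the Gersten presentation of the $K$-theory side to the Bloch--Ogus sequence \eqref{eqn: UCT}, with left vertical exactly $q\bmod n$ and right vertical the natural reduction, so that the snake lemma applies; and that the colimit identification intertwines the edge map $\alpha$ with the Abel--Jacobi map on torsion cycles. Verifying these naturalities, at the level of the Gersten complexes and of the cycle and regulator classes, is the real content, after which the homological algebra above is routine.
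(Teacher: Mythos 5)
Your step 1 is a correct (and clean) diagram chase, and your step 3 is the same homological algebra as the paper's proof, including the final ``finite and divisible, hence zero'' argument; in effect you repackage the paper's kernel-theoretic argument in terms of cokernels. The crux, however, is your step 2: the identification $\coker(\delta)\otimes{\QZ}\simeq {}_{tor}A^{p+1}(X)$, with the induced map to $N^{p}H^{2p+1}(X,{\Z})\otimes{\QZ}$ becoming ${}_{tor}AJ$. This is precisely the input that the paper does \emph{not} reprove: it quotes from \cite{Ma} the exact sequence $0 \to {\rm CH}^{p+1}(X,1)\otimes{\QZ} \to H^{p}(X,{\HH}^{p+1}({\Z}))\otimes{\QZ} \to {}_{tor}A^{p+1}(X) \to 0$, and from \cite{CT} \S 4 the fact that the descended map is the Abel--Jacobi map on torsion cycles. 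You attempt to prove this input from scratch, and that attempt has a genuine gap.

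The gap is the top row of your snake diagram. Its exactness is derived from the sheaf sequence $0\to\mathcal{K}^{M}_{p+1}\stackrel{n}{\to}\mathcal{K}^{M}_{p+1}\to\mathcal{K}^{M}_{p+1}/n\to 0$, i.e.\ from torsion-freeness of the Milnor $K$-sheaf. That is false: unlike ${\HH}^{p+1}({\Z})$, whose torsion-freeness is the Bloch--Kato consequence recalled in \S 2 of the paper, $\mathcal{K}^{M}_{p+1}$ has torsion. Already for weight $2$, the symbol $\{\zeta_{n},t\}\in K^{M}_{2}({\C}(t))$ satisfies $n\{\zeta_{n},t\}=\{\zeta_{n}^{n},t\}=0$, is nonzero because its tame symbol at $t=0$ equals $\zeta_{n}\neq 1$, and has trivial residues at every point of $\proj^{1}\setminus\{0,\infty\}$; by the Gersten resolution it is therefore a nonzero $n$-torsion section of $\mathcal{K}^{M}_{2}$, and the symbols $\{\zeta_{n},t_{1},\dots,t_{p}\}$ give analogous torsion in higher weight. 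So multiplication by $n$ is not injective on $\mathcal{K}^{M}_{p+1}$, the long exact sequence you invoke does not exist, and the exactness of $0\to{\rm CH}^{p+1}(X,1)/n\to H^{p}(X,\mathcal{K}^{M}_{p+1}/n)\to{}_{n}{\rm CH}^{p+1}(X)\to 0$ --- hence your identification $\coker(\delta)/n\simeq{}_{n}A^{p+1}(X)$ --- is unjustified. The statement you want is essentially true, but the known route replaces $H^{p}(X,\mathcal{K}^{M}_{p+1}/n)$ by the higher Chow group ${\rm CH}^{p+1}(X,1;{\Z}/n)$, where the universal coefficient sequence \emph{is} valid because Bloch's cycle complex has free terms, and then uses Beilinson--Lichtenbaum/Bloch--Kato to identify ${\rm CH}^{p+1}(X,1;{\Z}/n)$ with $H^{p}(X,{\HH}^{p+1}({\Z}/n))$ compatibly with the maps to cycles; that, together with the Abel--Jacobi compatibility you flagged but left open, is exactly the content of the results of \cite{Ma} and \cite{CT} cited in the paper. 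Either supply that argument or cite it as the paper does; the naive Milnor $K$-sheaf sequence cannot be repaired.
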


\begin{proof}
We denote by $N$ the quotient of $N^pH^{2p+1}(X, {\Z})$ by its torsion part. 
Then $\alpha$ induces a surjective map 
$\bar{\alpha}\colon H^{p}(X, {\HH}^{p+1}({\Z})) \to N$, 
and the composition 
\begin{equation}\label{eqn: cycle map torsion}
{\rm CH}^{p+1}(X, 1) \to H^{p}(X, {\HH}^{p+1}({\Z})) \stackrel{\bar{\alpha}}{\to} N 
\end{equation}
is the zero map by Lemma \ref{lem: torsion cycle class}. 

We take the tensor product of \eqref{eqn: cycle map torsion} with ${\QZ}$. 
First note that 
$N\otimes {\QZ} = N^{p}H^{2p+1}(X, {\Z})\otimes {\QZ}$ 
and $\alpha\otimes {\QZ}=\bar{\alpha}\otimes {\QZ}$ 
because the torsion part is annihilated by $\otimes {\QZ}$. 
By \cite{Ma}, the first map of \eqref{eqn: cycle map torsion} induces a short exact sequence 
\begin{equation*}
0 \to {\rm CH}^{p+1}(X, 1)\otimes {\QZ} \to H^{p}(X, {\HH}^{p+1}({\Z})) \otimes {\QZ} \to {}_{tor}A^{p+1}(X) \to 0.   
\end{equation*}
Thus the fact that the composition \eqref{eqn: cycle map torsion} is zero implies that the map 
\begin{equation*}
\alpha \otimes {\QZ} : H^{p}(X, {\HH}^{p+1}({\Z})) \otimes {\QZ} \to N^{p}H^{2p+1}(X, {\Z})\otimes {\QZ} 
\end{equation*}
descends to a map from ${}_{tor}A^{p+1}(X)$, 
which coincides with the Abel-Jacobi map ${}_{tor}AJ$ (see \cite{CT} \S 4). 
Therefore we obtain the short exact sequence 
\begin{equation*}
0 \to {\rm CH}^{p+1}(X, 1)\otimes {\QZ} \to {\ker}(\bar{\alpha} \otimes {\QZ}) \to {\ker}({}_{tor}AJ) \to 0. 
\end{equation*}
We note that 
${\ker}(\bar{\alpha} \otimes {\QZ}) = {\ker}(\bar{\alpha}) \otimes {\QZ}$ 
because $N$ is free so that  
${\rm Tor}_1(N, {\QZ})=0$, and hence  
\begin{equation*}
0 \to {\ker}(\bar{\alpha}) \otimes {\QZ} \to H^{p}(X, {\HH}^{p+1}({\Z})) \otimes {\QZ} \stackrel{\bar{\alpha}\otimes{\QZ}}{\to} N\otimes {\QZ} \to 0 
\end{equation*}
remains exact. 

The natural maps 
${\rm Ker}(\alpha)\otimes {\QZ} \to {\rm Ker}(\bar{\alpha})\otimes {\QZ}$ 
and 
${\rm CH}^{p+1}(X, 1)_{{\rm hom}}\otimes {\QZ} \to {\rm CH}^{p+1}(X, 1)\otimes {\QZ}$ 
are surjective with finite kernels, 
because ${\rm Ker}(\alpha)\subset {\rm Ker}(\bar{\alpha})$ and 
${\rm CH}^{p+1}(X, 1)_{{\rm hom}} \subset {\rm CH}^{p+1}(X, 1)$ 
are subgroups with finite quotients.  
These two maps induce the following commutative diagram with exact rows: 
\begin{equation*}
\xymatrix{
0 \ar[r] & {\rm CH}^{p+1}(X, 1)\otimes {\QZ} \ar[r] & {\ker}(\bar{\alpha}) \otimes {\QZ} \ar[r] & {\ker}({}_{tor}AJ) \ar[r] & 0 \\ 
& {\rm CH}^{p+1}(X, 1)_{{\rm hom}} \otimes {\QZ} \ar[r]_{\beta\otimes{\QZ}}  \ar@{->>}[u] &  {\ker}(\alpha) \otimes {\QZ}  \ar@{->>}[u] \ar[r] & 
{\rm Coker}(\beta \otimes {\QZ})  \ar@{->>}[u] \ar[r] & 0 
}
\end{equation*}
Since ${\rm Coker}(\beta)\otimes {\QZ} = {\rm Coker}(\beta\otimes {\QZ})$, 
we obtain the surjective map 
${\rm Coker}(\beta)\otimes {\QZ} \to {\ker}({}_{tor}AJ)$ 
as desired. 
By the snake lemma for the above diagram, 
we see that the kernel of this map is a quotient of the kernel of  
${\rm Ker}(\alpha)\otimes {\QZ} \to {\rm Ker}(\bar{\alpha})\otimes {\QZ}$, 
and hence finite. 
Finally, if ${\ker}({}_{tor}AJ)=0$, 
then ${\rm Coker}(\beta)\otimes {\QZ}$ is finite and divisible, and so is zero. 
\end{proof}

The torsion Abel-Jacobi map ${}_{tor}AJ$ on ${}_{tor}A^{p+1}(X)$ 
is known to be injective when $p+1=2$ (\cite{M-S}, \cite{Mu}) and $p+1=\dim X$ (\cite{Ro}). 
When $p+1=\dim X -1$, the kernel ${\ker}({}_{tor}AJ)$ itself is a birational invariant (\cite{Vo4} \S 2.3.3).


\section{Proof of Theorem \ref{main2}}\label{sec:homology}

In this section we prove Theorem \ref{main2}. 
Since the argument is similar to the proof of Theorem \ref{main1}, we will be brief. 

Let $X$ be a smooth complex projective variety of dimension $d$. 
Let $k>1$. 
By the exact sequence \eqref{eqn: UCT} with $(p, q)=(d-k, d)$, we have 
\begin{equation*}
H^{d-k}(X, {\HH}^{d}({\Z}/n)) / (H^{d-k}(X, {\HH}^{d}({\Z}))/n) 
\simeq {}_{n}H^{d-k+1}(X, {\HH}^{d}({\Z})).   
\end{equation*}
We introduce a filtration on 
${}_{n}H^{d-k+1}(X, {\HH}^{d}({\Z}))$ 
through the Bloch-Ogus spectral sequence with ${\Z}$-coefficients, 
which we again write $(E_{r}^{p,q}, d_{r})$ as in \S \ref{ssec: Proof 1.1}. 
Lemma \ref{lem: calculation spectral sequence} is replaced by the following, 
which slightly differs from Lemma \ref{lem: calculation spectral sequence} 
in the exactness of \eqref{eqn: dr complex homology}. 

\begin{lemma}
We have the filtration of length $[k/2]$ 
\begin{equation}\label{eqn: filtration homology}
E^{d-k+1,d}_{\infty} = E^{d-k+1,d}_{[k/2]+1} \subset \cdots \subset E^{d-k+1,d}_{3} \subset E^{d-k+1,d}_{2} 
\end{equation}
on $E^{d-k+1,d}_{2}$. 
The differential $d_{r}$ embeds 
$E^{d-k+1,d}_{r}/E^{d-k+1,d}_{r+1}$ into $E^{d-k+r+1,d-r+1}_{r}$, 
which when $r>(k+1)/4$ is extended to a short exact sequence 
\begin{equation}\label{eqn: dr complex homology}
0 \to E^{d-k+1,d}_{r}/E^{d-k+1,d}_{r+1} \to 
E^{d-k+r+1,d-r+1}_{r} \to E^{d-k+r+1,d-r+1}_{\infty} \to 0. 
\end{equation}
\end{lemma}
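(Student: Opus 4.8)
The plan is to run the same analysis of the Bloch-Ogus spectral sequence $(E_r^{p,q}, d_r)$ as in the proof of Lemma \ref{lem: calculation spectral sequence}, but now centered at the spot $(p,q)=(d-k+1,d)$ rather than $(1,k)$. Two structural inputs drive everything: the Bloch-Ogus vanishing $E_2^{p,q}=H^p(X,\HH^q(\Z))=0$ for $p>q$, together with the standard semipurity vanishing of the sheaves $\HH^q(\Z)$ for $q>d=\dim X$. Whereas in Lemma \ref{lem: calculation spectral sequence} the incoming differentials were killed by the half-plane vanishing $E_r^{p,q}=0$ for $p<0$, here the first coordinate $d-k+1$ is in general positive, so it is the second vanishing ($\HH^q=0$ for $q>d$) that will do the work. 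Note the relevant spot lies on the top edge $q=d$ of the nonzero region $\{0\le p\le q\le d\}$, and the incoming differentials come from above that edge.

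First I would establish the filtration \eqref{eqn: filtration homology}. The differential landing in $E_r^{d-k+1,d}$ originates at $E_r^{d-k+1-r,\,d+r-1}$, whose sheaf is $\HH^{d+r-1}(\Z)$ with $d+r-1>d$ for $r\ge 2$; hence this incoming $d_r$ vanishes and $E_{r+1}^{d-k+1,d}=\ker(d_r\colon E_r^{d-k+1,d}\to E_r^{d-k+r+1,\,d-r+1})$ is a subgroup, with $d_r$ inducing the claimed embedding of $E_r^{d-k+1,d}/E_{r+1}^{d-k+1,d}$ into $E_r^{d-k+r+1,\,d-r+1}$. For stabilization, when $r>k/2$ the target has first coordinate $d-k+r+1$ exceeding its second coordinate $d-r+1$, so it vanishes by Bloch-Ogus; thus the filtration terminates at $r=[k/2]+1$ with $E_{[k/2]+1}^{d-k+1,d}=E_\infty^{d-k+1,d}$, which has length $[k/2]$.

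Next I would produce the short exact sequence \eqref{eqn: dr complex homology} under the hypothesis $r>(k+1)/4$. The outgoing differential $d_r$ from $E_r^{d-k+r+1,\,d-r+1}$ lands in $E_r^{d-k+2r+1,\,d-2r+2}$, whose first coordinate exceeds the second precisely when $4r>k+1$, so it vanishes and $E_{r+1}^{d-k+r+1,\,d-r+1}=E_r^{d-k+r+1,\,d-r+1}/\im(d_r)$, the image being exactly that of the embedding constructed above. The decisive point is to verify that this already equals $E_\infty^{d-k+r+1,\,d-r+1}$, i.e. that no later differential touches this spot: the outgoing differentials $d_s$, $s\ge r+1$, again vanish by a first-coordinate-exceeds-second computation, and crucially every incoming differential $d_s$, $s\ge r+1$, originates at a sheaf $\HH^{d-r+s}(\Z)$ with $d-r+s>d$, hence vanishes. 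Consequently the kernel of the edge surjection $E_r^{d-k+r+1,\,d-r+1}\twoheadrightarrow E_\infty^{d-k+r+1,\,d-r+1}$ is precisely $\im(d_r)$, giving the asserted exact sequence.

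The main point requiring care — and the one place where the argument genuinely diverges from Lemma \ref{lem: calculation spectral sequence} — is exactly this automatic vanishing of all higher incoming differentials. In the cohomology case the potentially surviving differential $d_{r+1}$ out of $E_{r+1}^{0,k+1}$ forced the conclusion there to be a mere complex, exact only when ${\rm CH}_{0}(X)$ is small; here the corresponding sources automatically vanish by the degree bound $q>d$, so exactness in \eqref{eqn: dr complex homology} holds unconditionally. The hard part will therefore not be any single estimate but keeping the two vanishing mechanisms cleanly separated and checking that the index inequalities ($r>k/2$ for termination, $r>(k+1)/4$ for the outgoing differentials, and $q>d$ for all incoming ones) are mutually compatible; once they are, exactness follows formally.
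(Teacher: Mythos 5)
Your proof is correct and follows essentially the same route as the paper's: both rest on the semipurity vanishing $\HH^{q}({\Z})=0$ for $q>d$ (so $E_2^{p,q}=0$ for $q>d$) to kill every differential hitting $E_r^{d-k+1,d}$ and, at later pages, the spot $E^{d-k+r+1,d-r+1}$, combined with the Bloch--Ogus vanishing $E_2^{p,q}=0$ for $p>q$ to kill the outgoing differentials, which gives the filtration, the termination at $r=[k/2]+1$, and the unconditional exactness of \eqref{eqn: dr complex homology}. The paper's proof is simply a terser version of your index computations, down to the same observation that the incoming source replacing $E^{0,k+1}$ from the cohomological case now lies in the vanishing range $q>d$, which is exactly why exactness holds here without any hypothesis on ${\rm CH}_0(X)$.
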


\begin{proof}
The proof is similar to that of Lemma \ref{lem: calculation spectral sequence}. 
The point is that 
${\HH}^{q}({\Z})=0$ if $q>d$, 
because smooth affine varieties of dimension $d$ have 
homotopy type of CW complex of real dimension $\leq d$. 
Therefore $E_{2}^{p,q}=0$ if $q>d$. 
In particular, no nonzero $d_{r}$ hits to $E_{r}^{d-k+1,d}$. 
Hence 
\begin{equation*}
E^{d-k+1,d}_{r+1} = {\ker} ( d_{r} : E^{d-k+1,d}_{r} \to E^{d-k+r+1,d-r+1}_{r}). 
\end{equation*}
The sequence \eqref{eqn: dr complex homology} is exact because 
\begin{equation*}
E^{d-k+r+1,d-r+1}_{\infty} = E^{d-k+r+1,d-r+1}_{r+1} = E^{d-k+r+1,d-r+1}_{r}/d_{r}(E^{d-k+1,d}_{r})
\end{equation*}
when $r>(k+1)/4$. 
\end{proof}

\begin{proof}[(Proof of Theorem \ref{main2})]
Taking the $n$-torsion part of \eqref{eqn: filtration homology} gives the filtration 
\begin{equation*}\label{eqn: filtration homology 2}
{}_{n}E^{d-k+1,d}_{\infty} = {}_{n}E^{d-k+1,d}_{[k/2]+1} 
\subset \cdots \subset {}_{n}E^{d-k+1,d}_{3} \subset {}_{n}E^{d-k+1,d}_{2} 
\end{equation*}
on ${}_{n}E^{d-k+1,d}_{2}={}_{n}H^{d-k+1}(X, {\HH}^{d}({\Z}))$ 
with 
\begin{equation*}
{}_{n}E^{d-k+1,d}_{\infty} = {}_{n}(H^{2d-k+1}(X, {\Z}) / N^{d-k+2}H^{2d-k+1}(X, {\Z})). 
\end{equation*}
Here note that 
\begin{equation*}
N^{d-k+1}H^{2d-k+1}(X, {\Z})=H^{2d-k+1}(X, {\Z}) 
\end{equation*}
because the Bloch-Ogus spectral sequence is supported in the range $q\leq d$. 
The exact sequence \eqref{eqn: dr complex homology} gives a complex 
\begin{equation*}
{}_{n}E^{d-k+1,d}_{r} / {}_{n}E^{d-k+1,d}_{r+1} \hookrightarrow  
{}_{n}E^{d-k+r+1,d-r+1}_{r} \to {}_{n}E^{d-k+r+1,d-r+1}_{\infty}. 
\end{equation*}
Here the first map is injective for the same reason as \eqref{eqn: still injective}. 
When $k=2p$ and $r = p$, 
since $E^{d-2p+1,d}_{p+1}=E^{d-2p+1,d}_{\infty}$, 
this embeds 
${}_{n}E^{d-2p+1,d}_{p} / {}_{n}E^{d-2p+1,d}_{\infty}$  
into the kernel of 
$E^{d-p+1,d-p+1}_{p} \twoheadrightarrow E^{d-p+1,d-p+1}_{\infty}$. 
This kernel is a quotient of the kernel of 
$E^{d-p+1,d-p+1}_{2} \twoheadrightarrow E^{d-p+1,d-p+1}_{\infty}$, 
namely the Griffiths group ${\rm Griff}^{d-p+1}(X)$. 

Taking the direct limit with respect to $n$, 
we obtain the filtration 
\begin{equation*}\label{eqn: filtration homology 2}
{}_{tor}E^{d-k+1,d}_{\infty} = {}_{tor}E^{d-k+1,d}_{[k/2]+1} 
\subset \cdots \subset {}_{tor}E^{d-k+1,d}_{3} \subset {}_{tor}E^{d-k+1,d}_{2} 
\end{equation*}
on 
\begin{eqnarray*} 
{}_{tor}E^{d-k+1,d}_{2} 
& = & 
{}_{tor}H^{d-k+1}(X, {\HH}^{d}({\Z})) \\ 
& \simeq & 
H^{d-k}(X, {\HH}^{d}({\QZ})) / (H^{d-k}(X, {\HH}^{d}({\Z}))\otimes {\QZ})  
\end{eqnarray*} 
with 
\begin{equation*}
{}_{tor}E^{d-k+1,d}_{\infty} = {}_{tor}(H^{2d-k+1}(X, {\Z}) / N^{d-k+2}H^{2d-k+1}(X, {\Z})). 
\end{equation*} 
When ${\rm CH}_{0}(X)$ is supported in dimension $<k$,  
the group $H^{d-k}(X, {\HH}^{d}({\Z}))$ is torsion (\cite{CT-V}) and hence 
annihilated by the tensor product with ${\QZ}$. 
Moreover, for the same reason, 
when ${\rm CH}_{0}(X)$ is supported in dimension $<k-1$, 
the group 
\begin{equation*}
H^{2d-k+1}(X, {\Z}) / N^{d-k+2}H^{2d-k+1}(X, {\Z})  \; \subset \; 
H^{d-k+1}(X, {\HH}^d({\Z})) 
\end{equation*}
is torsion and so is finite. 
\end{proof}

Let $1<k<d$. 
The subgroup 
\begin{equation}\label{eqn: generalized AM inv homology}
{}_{tor}(H^{2d-k+1}(X, {\Z}) / N^{d-k+2}H^{2d-k+1}(X, {\Z}))
\end{equation}
of $H^{d-k}(X, {\HH}^{d}({\QZ}))$ 
is an analogue of the higher Artin-Mumford invariant \eqref{eqn: generalized AM inv}. 

\begin{example}
Let $X\subset {\proj}^N$ be a smooth complete intersection of dimension $d\geq 3$. 
Then the group \eqref{eqn: generalized AM inv homology} may be nontrivial only when $k$ is odd, 
in which case it is cyclic of order dividing ${\rm deg}(X)$. 

Indeed, we have 
\begin{equation*}
H^{2d-k+1}(X, {\Z}) \simeq H_{k-1}(X, {\Z}) \simeq H_{k-1}({\proj}^N, {\Z}) 
\end{equation*}
like \eqref{eqn: PD+Lef}. 
This shows that $H^{2d-k+1}(X, {\Z})=0$ when $k$ is even; 
and when $k$ is odd, $H^{2d-k+1}(X, {\Z})$ is free of rank $1$ generated by 
${\rm deg}(X)^{-1}h^{d-(k-1)/2}$ where $h=c_{1}(\mathcal{O}_{X}(1))$. 
Like the case $l=d+1$ in Example \ref{ex: c.i.}, this implies that 
the group \eqref{eqn: generalized AM inv homology} is cyclic of order dividing ${\rm deg}(X)$. 
It is the most transcendental part of the defect of the integral Hodge conjecture for $H^{2d-k+1}(X, {\Z})$. 
\end{example}

\medskip

\noindent
\textbf{Acknowledgement.} 
I am grateful to the referees for valuable suggestions and comments 
which led us to improve this paper.



\begin{thebibliography}{99}


\bibitem{A-K}
Arapura, D.; Kang, S. 
\textit{Functoriality of the coniveau filtration.}  
Canad. Math. Bull. \textbf{50} (2007), no. 2, 161--171. 

\bibitem{A-M}
Artin, M.; Mumford, D.
\textit{Some elementary examples of unirational varieties which are not rational.} 
Proc. London Math. Soc. (3) \textbf{25} (1972), 75--95. 


\bibitem{BV}
Barbieri-Viale, L.
\textit{On the Deligne-Beilinson cohomology sheaves.} 
 Ann. K-Theory \textbf{1}, no.1 (2016), 3--17. 

\bibitem{B-O}
Bloch, S.; Ogus, A.
\textit{Gersten's conjecture and the homology of schemes.} 
Ann. Sci. \'Ecole Norm. Sup. (4) \textbf{7} (1974), 181--201. 


\bibitem{CT}
Colliot-Th\'el\`ene, J.-L.
\textit{Cycles alg\'ebriques de torsion et K-th\'eorie alg\'ebrique.} 
in ``Arithmetic algebraic geometry (Trento, 1991)'', 1--49, 
Lecture Notes in Math., \textbf{1553}, Springer, 1993. 


\bibitem{CT-O}
Colliot-Th\'el\`ene, J.-L.; Ojanguren, M. 
\textit{Vari\'et\'es unirationelles non rationelles: au-del\`a de l'exemple d'Artin et Mumford.} 
Invent. Math. \textbf{97} (1989), 141--158. 

\bibitem{CT-V}
Colliot-Th\'el\`ene, J.-L.; Voisin, C.
\textit{Cohomologie non ramifi\'ee et conjecture de Hodge enti\`ere.}
Duke Math. J. \textbf{161} (2012), no. 5, 735--801. 

\bibitem{EV-MS}
Elbaz-Vincent, P.; M\"uller-Stach, S.
\textit{Milnor K-theory of rings, higher Chow groups and applications.} 
Invent. Math. \textbf{148} (2002), no. 1, 177--206. 

\bibitem{Es}
Esnault, H. 
\textit{Une remarque sur la cohomologie du faisceau de Zariski 
de la K-th\'eorie de Milnor sur une vari\'et\'e lisse complexe.} 
Math. Z. \textbf{205} (1990), no. 3, 373--378. 

\bibitem{Gro}
Grothendieck, A.
\textit{Hodge's general conjecture is false for trivial reasons.} 
Topology \textbf{8} (1969) 299--303. 

\bibitem{Ka}
Kahn, B. 
\textit{Classes de cycles motiviques \'etales.}
Algebra Number Theory \textbf{6} (2012), no. 7, 1369--1407. 

\bibitem{Ke}
Kerz, M.
\textit{The Gersten conjecture for Milnor K-theory.} 
Invent. Math. \textbf{175} (2009), no. 1, 1--33. 

\bibitem{Ma}
Ma, S. 
\textit{Torsion 1-cycles and the coniveau spectral sequence.} 
Doc. Math. \textbf{22} (2017), 1501--1517. 

\bibitem{M-S}
Merkurjev, A. S.; Suslin, A. A.
\textit{K-cohomology of Severi-Brauer varieties and the norm residue homomorphism.} 
Math. USSR-Izv. \textbf{21} (1983), no. 2, 307--340. 

\bibitem{MS}
M\"uller-Stach, S. J. 
\textit{Constructing indecomposable motivic cohomology classes on algebraic surfaces.}  
J. Algebraic Geom. \textbf{6} (1997), no. 3, 513--543. 

\bibitem{Mu}
Murre, J. P. 
\textit{Un r\'esultat en th\'eorie des cycles alg\'ebriques de codimension deux.} 
C. R. Acad. Sci. Paris S\'er. I  \textbf{296} (1983), no. 23, 981--984. 

\bibitem{Ro}
Roitman, A. A.
\textit{The torsion of the group of $0$-cycles modulo rational equivalence.} 
Ann. of Math. (2) \textbf{111} (1980), no. 3, 553--569. 

\bibitem{Sc1}
Schreieder, S. 
\textit{Refined unramified homology of schemes.} 
arXiv:2010.05814

\bibitem{Sc2}
Schreieder, S. 
\textit{Infinite torsion in Griffiths groups.}
arXiv:2011.15047

\bibitem{Voe}
Voevodsky, V.
\textit{On motivic cohomology with ${\Z}/l$-coefficients.}
Ann. of Math. (2) \textbf{174} (2011), no. 1, 401--438. 

\bibitem{Vo}
Voisin, C.
\textit{Degree 4 unramified cohomology with finite coefficients and torsion codimension 3 cycles.} 
in "Geometry and arithmetic", 347--368, 
EMS Ser. Congr. Rep., Eur. Math. Soc., 2012. 

\bibitem{Vo2}
Voisin, C. 
\textit{Chow rings, decomposition of the diagonal and the topology of families.}
Princeton Univ. Press, 2014. 

\bibitem{Vo3}
Voisin, C. 
\textit{Stable birational invariants and the L\"uroth problem.} 
Surveys in Differential Geometry
\textbf{21} (2016), 313--342. 

\bibitem{Vo4}
Voisin, C. 
\textit{Birational invariants and decomposition of the diagonal.}  
in "Birational geometry of hypersurfaces", 
Lect. Notes. Unione Mat. Italiana \textbf{26}, Springer, 3--71 (2019).


\end{thebibliography}
\end{document}